\newtheorem{thm}{Theorem}[section]
\newtheorem{lem}[thm]{Lemma}
\theoremstyle{definition}
\newtheorem{defn}{Definition}[section]
\newtheorem{rem}{Remark}[section]
\numberwithin{equation}{section}
 \newcommand{\be}{\begin{equation}}
 \newcommand{\ee}{\end{equation}}
 \newcommand\bes{\begin{eqnarray}}
 \newcommand\ees{\end{eqnarray}}
 \newcommand{\bess}{\begin{eqnarray*}}
 \newcommand{\eess}{\end{eqnarray*}}
\begin{document}
\title[chemotaxis-Navier-Stokes system]
      {Global weak solutions for the three-dimensional chemotaxis-Navier-Stokes system with nonlinear diffusion}%
\author[Zhang]{Qingshan Zhang }%
\address{Department of Mathematics, Southeast University, Nanjing 211189, P. R. China}
\email{qingshan11@yeah.net}


\author[Li]{Yuxiang Li }%
\address{Department of Mathematics, Southeast University, Nanjing 211189, P. R. China}
\email{lieyx@seu.edu.cn}

\thanks{Supported in part by National Natural Science Foundation of China (No. 11171063).}

\subjclass[2010]{35Q92, 35K55, 35Q35, 76S05, 92C17.}%
\keywords{chemotaxis, Navier-Stokes equation, nonlinear diffusion, global existence.}


\begin{abstract}
We consider an initial-boundary value problem for the incompressible chemotaxis-Navier-Stokes equations generalizing the  porous-medium-type diffusion model
\begin{eqnarray*}
  \left\{\begin{array}{lll}
     \medskip
     n_t+u\cdot\nabla n=\Delta n^m-\nabla\cdot(n\chi(c)\nabla c),&{} x\in\Omega,\ t>0,\\
     \medskip
     c_t+u\cdot\nabla c=\Delta c-nf(c),&{}x\in\Omega,\ t>0,\\
     \medskip
     u_t+\kappa(u\cdot\nabla)u=\Delta u+\nabla P+n\nabla\Phi ,&{}x\in\Omega,\ t>0,\\
     \medskip
     \nabla\cdot u=0, &{}x\in\Omega,\ t>0,
  \end{array}\right.
\end{eqnarray*}
in a bounded convex domain $\Omega\subset\mathbb{R}^3$. It is proved that if $m\geq\frac{2}{3}$, $\kappa\in\mathbb{R}$, $0<\chi\in C^2([0,\infty))$, $0\leq f\in C^1([0,\infty))$ with $f(0)=0$ and $\Phi\in W^{1,\infty}(\Omega)$, then for sufficiently smooth initial data $(n_0, c_0, u_0)$ the model possesses at least one global weak solution.
\end{abstract}
\maketitle

\section{Introduction}
Chemotaxis is the directed movement of living cells under the effects of chemical gradients. Aerobic bacteria such as {\it{Bacillus subtilis}}
often live in thin fluid layers near solid-air-water contact line, in which the swimming bacteria move towards higher concentration of oxygen according to mechanism of chemotaxis and meanwhile the movement of fluid is under the influence of gravitational force generated by bacteria themselves. Both the oxygen concentration and bacteria density are transported by the fluid and diffuse through the fluid (\cite{Dombrowski-PRL-2004,Tao&Winkler-AIHP-2013,Lorz-M3AS-2010}).

To model such biological processes, Tuval et al. \cite{Tuval-P-2005} proposed the following model
\begin{eqnarray}\label{CNS1}
  \left\{\begin{array}{lll}
     \medskip
     n_t+u\cdot\nabla n=\Delta n-\nabla\cdot(n\chi(c)\nabla c),&{} x\in\Omega,\ t>0,\\
     \medskip
     c_t+u\cdot\nabla c=\Delta c-nf(c),&{}x\in\Omega,\ t>0,\\
     \medskip
     u_t+\kappa(u\cdot\nabla)u=\Delta u+\nabla P+n\nabla\Phi ,&{}x\in\Omega,\ t>0,\\
     \medskip
     \nabla\cdot u=0, &{}x\in\Omega,\ t>0
  \end{array}\right.
\end{eqnarray}
in a domain $\Omega\subset\mathbb{R}^N$, where the scalar functions $n=n(x,t)$ and $c=c(x,t)$ denote bacterial density and the concentration of oxygen, respectively. The vector $u=(u_1(x,t), u_2(x,t), \cdots, u_N(x,t))$ is the fluid velocity field and the associated pressure is represented by $P=P(x,t)$. The function $\chi$ is called the chemotactic sensitivity, $f$ is the consumption rate oxygen by the bacteria and $\kappa\in\mathbb{R}$ measures the strength of nonlinear fluid convection. The given function $\Phi$ stands for the gravitational potential produced by the action of physical forces on the cell.

The chemotaxis fluid system has been studied in the last few years and the main focus is on the solvability result. Under the assumption that $\chi(c)=\chi$ is a constant and $f$ is monotonically increasing with $f(0)=0$, Lorz \cite{Lorz-M3AS-2010} constructed local weak solutions in a bounded domain $\mathbb{R}^N$ $(N=2,3)$ with no-flux boundary condition and in $\mathbb{R}^2$ in the case of inhomogeneous Dirichlet conditions for oxygen. In bounded convex domains $\Omega\subset\mathbb{R}^2$, Winkler \cite{Winkler-CPDE-2012} proved that the initial-boundary value problem for (\ref{CNS1}) possesses a unique global classical solution. In \cite{Winkler-ARMA-2014} the same author showed that the global classical solutions obtained in \cite{Winkler-CPDE-2012} stabilize to the spatially uniform equilibrium $(\bar{n}_0, 0, 0)$ with $\bar{n}_0:=\frac{1}{|\Omega|}\int_{\Omega}n_0(x)dx$ as $t\rightarrow\infty$. Zhang and Li \cite{Zhang&Li-DCDSb-2015} proved that such solution converges to the equilibrium $(\bar{n}_0, 0, 0)$ exponentially in time. By deriving a new type of entropy-energy estimate, Jiang et al. \cite{Jiang&Zhengsongmu-2014} generalized the result of \cite{Winkler-ARMA-2014} to general bounded domains. For the well-posedness of the Cauchy problem to (\ref{CNS1}) in the whole space we refer the reader to  \cite{Duan&Lorz&Markowich-CPDE-2010,Liu&Lorz-AIHP-2011,Chae-DCDS-2013,ZhangQian-NARWA-2014,Zhang&Zheng-SIAM-2014,Chae-CPDE-2014}.

When the nonlinear convective term is ignored ($\kappa=0$ in (\ref{CNS1})), which means the fluid motion is slow, and the model is simplified to the chemotaxis-Stokes equation. In this modified version, global weak solutions are constructed for the two-dimensional Cauchy problem \cite{Duan&Lorz&Markowich-CPDE-2010}. In a bounded convex domain $\Omega\subset\mathbb{R}^3$, the chemotaxis-Stokes system possesses at least one global weak solution \cite{Winkler-CPDE-2012}.

The diffusion of bacteria may depend nonlinearly on their densities \cite{Hillen-JMB-2009,Vazquez-2007,Tao&Winkler-AIHP-2013,Tao&Winkler-DCDS-2012}. Introducing this into the model (\ref{CNS1}) leads to the chemotaxis-Navier-Stokes system with nonlinear diffusion\cite{Di&Lorz&Markowich-DCDS-2010}
\begin{eqnarray}\label{CNS}
  \left\{\begin{array}{lll}
     \medskip
     n_t+u\cdot\nabla n=\nabla\cdot\left(D(n)\nabla n\right)-\nabla\cdot(n\chi(c)\nabla c),&{} x\in\Omega,\ t>0,\\
     \medskip
     c_t+u\cdot\nabla c=\Delta c-nf(c),&{}x\in\Omega,\ t>0,\\
     \medskip
     u_t+\kappa(u\cdot\nabla)u=\Delta u+\nabla P+n\nabla\Phi ,&{}x\in\Omega,\ t>0,\\
     \medskip
     \nabla\cdot u=0, &{}x\in\Omega,\ t>0.
  \end{array}\right.
\end{eqnarray}
Up to now, the main issue of investigation to (\ref{CNS}) seems to concentrate on the chemotaxis-Stokes variant. Under the assumption $D(n)=mn^{m-1}$, Di Francesco et al. \cite{Di&Lorz&Markowich-DCDS-2010} proved that when $m\in(\frac{3}{2}, 2]$ the chemotaxis-Stokes system admits a global-in-time solution for general initial data in the bounded domain $\Omega\subset\mathbb{R}^2$. The same result holds in three-dimensional setting under the constraint $m\in(\frac{7+\sqrt{217}}{12}, 2]$. Tao and Winkler \cite{Tao&Winkler-DCDS-2012,Tao&Winkler-AIHP-2013} extended the global existence result so as to cover the whole range $m\in(1, \infty)$ in the bounded domain $\Omega\subset\mathbb{R}^2$ and $m\in(\frac{8}{7}, \infty)$ in the bounded convex domain $\Omega\subset\mathbb{R}^3$. In \cite{Liu&Lorz-AIHP-2011}, global existence of weak solution to the Cauchy problem of chemotaxis-Stokes system is established with $m=\frac{4}{3}$ in $\Omega=\mathbb{R}^2$. Recently, Duan and Xiang \cite{Duan&Xiang-IMRN-2014} generalized the global existence result for all exponents $m\in[1, \infty)$ .

In contrast to the chemotaxis-Stokes system, very few results of global solvability are available for the full nonlinear chemotaxis-Navier-Stokes system. In the case $\Omega\subseteqq \mathbb{R}^2$, global weak solutions are constructed by setting $D(n)=mn^{m-1}$ with $m\in[1, \infty)$ \cite{Duan&Xiang-IMRN-2014}. For the three-dimensional initial-boundary value problem, the only result we are aware of is that when $m>\frac{4}{3}$ the full system with nonlinear diffusion admits a global weak solution provided that $\Phi\in L^1_{loc}((0, \infty); L^1_{loc}(\Omega))$ with $\nabla\Phi\in L^2_{loc}((0, \infty); L^{\infty}(\Omega))$, and $\chi$ and $f$ are continuous differentiable satisfying $\chi'\geq0$, $f\geq0$ and $f(0)=0$ \cite{Vorotnikov-CMS-2014}.

Recently, for sufficiently smooth initial data $(n_0, c_0, u_0)$, Winkler \cite{winkler2014global} established global weak solutions of (\ref{CNS1}) in bounded convex domains $\Omega\subset\mathbb{R}^3$ under the assumptions $\chi\in C^2([0,\infty))$, $f\in C^1([0,\infty))$ with $f(0)=0$ and $\Phi\in W^{1,\infty}(\Omega)$. Motivated by the work of \cite{winkler2014global},
our purpose of the present paper is to consider the full chemotaxis-Navier-Stokes system with nonlinear diffusion. In order to formulate our result, we specify the precise mathematical setting: we shall subsequently consider (\ref{CNS}) along with boundary conditions
\begin{equation}\label{boundary c}
D(n)\frac{\partial n}{\partial\nu}=\frac{\partial c}{\partial\nu}=0,\quad u=0,\quad x\in\partial\Omega,\ t>0
\end{equation}
and the initial conditions
\begin{equation}\label{initial c}
n(x,0)=n_0(x),\quad c(x,0)=c_0(x),\quad u(x,0)=u_0(x),\quad x\in\Omega
\end{equation}
in a bounded convex domains $\Omega\subset\mathbb{R}^3$ with smooth boundary, where we assume
\begin{eqnarray}\label{initial data assumption}
  \left\{\begin{array}{lll}
     \medskip
     n_0\in L\log L(\Omega)\ \mbox{is positive},\\
     \medskip
     c_0\in L^{\infty}(\Omega)\ \mbox{is nonnegative and such that}\ \sqrt{c_0}\in W^{1,2}(\Omega), \\
     \medskip
     u_0\in L_{\sigma}^{2}(\Omega).
  \end{array}\right.
\end{eqnarray}
With respect to the parameter function in (\ref{CNS}), we shall suppose throughout the paper that
\begin{equation}\label{D1}
D(s)\in C_{loc}^{1+\gamma}\left((0, \infty)\right)\quad\mbox{for some}\ \gamma>0,
\end{equation}
\begin{equation}\label{D2}
D_1s^{m-1}\leq D(s)\leq D_2s^{m-1}\quad\mbox{for all}\ s>0
\end{equation}
with $m\geq\frac{2}{3}$ and $D_2\geq D_1>0$, and that
\begin{eqnarray}\label{D3}
  \left\{\begin{array}{lll}
     \medskip
     \chi\in C^2([0,\infty)),\quad \chi>0\quad \mbox{in}\ [0,\infty),\\
     \medskip
     f\in C^1([0,\infty)),\quad f(0)=0,\quad f>0\quad \mbox{in}\ (0,\infty),\\
     \medskip
     \Phi\in W^{1,\infty}(\Omega).
  \end{array}\right.
\end{eqnarray}
Moreover, we shall require the further technical assumptions
\begin{equation}\label{D4}
\left(\frac{f}{\chi}\right)'>0,\quad\mbox{on}\ [0,\infty)
\end{equation}
\begin{equation}\label{D5}
\left(\frac{f}{\chi}\right)''\leq0,\quad\mbox{on}\ [0,\infty)
\end{equation}
and
\begin{equation}\label{D6}
\left(\chi\cdot f\right)'\geq0,\quad\mbox{on}\ [0,\infty).
\end{equation}

Our main result reads as follows.
\begin{thm}\label{main result}
Let $\Omega\subset\mathbb{R}^3$ be a bounded convex domain with smooth boundary and $\kappa\in\mathbb{R}$. Suppose that the assumptions (\ref{initial data assumption})-(\ref{D6}) hold. Then there exists at least one global weak solution (in the sense of Definition \ref{global weak sol} below) of (\ref{CNS})-(\ref{initial c}) such that
\begin{equation*}
n^{\frac{m}{2}}\in L_{loc}^{2}([0, \infty); W^{1,2}(\Omega))\quad\mbox{and}\quad c^{\frac{1}{4}}\in L_{loc}^{4}([0, \infty); W^{1,4}(\Omega)).
\end{equation*}
\end{thm}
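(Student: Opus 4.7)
The plan is to follow the regularization--compactness framework developed for the non-degenerate case by Winkler \cite{winkler2014global}, suitably modified to accommodate the possibly degenerate diffusion $D$. First I would approximate the system by replacing $D(s)$ with a smooth, uniformly positive approximation such as $D_\varepsilon(s):=D(s+\varepsilon)$, smoothing the convective term of the Navier--Stokes equation through a Yosida regularization $u\mapsto(1+\varepsilon A)^{-1}u$ with $A$ the Stokes operator, and mollifying the initial data so that $(n_{0\varepsilon},c_{0\varepsilon},u_{0\varepsilon})$ are smooth and compatible. Standard semigroup and parabolic theory then yield global classical solutions $(n_\varepsilon,c_\varepsilon,u_\varepsilon)$ of the approximate system.

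\medskip

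\noindent\textbf{Uniform estimates.} Integration of the first equation and the maximum principle applied to the second (using $f\geq0$) immediately give $\|n_\varepsilon(\cdot,t)\|_{L^1}=\|n_0\|_{L^1}$ and $\|c_\varepsilon(\cdot,t)\|_{L^\infty}\leq\|c_0\|_{L^\infty}$. Testing the Navier--Stokes equation by $u_\varepsilon$ yields the energy identity
\begin{equation*}
\frac{1}{2}\frac{d}{dt}\|u_\varepsilon\|_{L^2}^{2}+\|\nabla u_\varepsilon\|_{L^2}^{2}=\int_\Omega n_\varepsilon\nabla\Phi\cdot u_\varepsilon,
\end{equation*}
whose right-hand side can be absorbed after an application of Sobolev embedding. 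The crucial input is a coupled entropy identity of the form
\begin{equation*}
\frac{d}{dt}\left\{\int_\Omega n_\varepsilon\log n_\varepsilon+\frac{1}{2}\int_\Omega\Psi(c_\varepsilon)|\nabla c_\varepsilon|^2\right\}+\mathcal{D}_\varepsilon(t)\leq C,
\end{equation*}
with $\Psi$ defined via $f/\chi$ as in \cite{winkler2014global}. The technical assumptions (D4)--(D6) together with the convexity of $\Omega$ (which provides the boundary inequality $\partial_\nu|\nabla c|^2\leq0$) ensure that the cross terms arising in the computation are either of favourable sign or absorbable into the dissipation $\mathcal{D}_\varepsilon$. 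The diffusive contribution gives a term comparable to $\int_\Omega|\nabla n_\varepsilon^{m/2}|^2$, so after integration in time one obtains the bound $n_\varepsilon^{m/2}\in L^2_{loc}([0,\infty);W^{1,2}(\Omega))$, while the $c$-part of the dissipation, rewritten using $|\nabla c^{1/4}|^4=(1/4)^4 c^{-3}|\nabla c|^4$, produces $c_\varepsilon^{1/4}\in L^4_{loc}([0,\infty);W^{1,4}(\Omega))$.

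\medskip

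\noindent\textbf{Compactness and main obstacle.} Combining $n_\varepsilon\in L^\infty(L^1)$ with $n_\varepsilon^{m/2}\in L^2(H^1)$ and using Gagliardo--Nirenberg interpolation yields integrability for $n_\varepsilon$ up to $L^{5m/3}$ in space-time, which for $m\geq2/3$ is precisely what is needed to give distributional meaning to the chemotactic flux $n\chi(c)\nabla c$, the convective flux $nu$, and the fluid forcing $n\nabla\Phi$ in the limit. Estimating $\partial_t n_\varepsilon$, $\partial_t c_\varepsilon$ and $\partial_t u_\varepsilon$ in suitable negative-order Sobolev spaces and invoking Aubin--Lions then produces a subsequence converging in $L^p_{loc}$ and almost everywhere. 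The main difficulty, which is where the exponent $m=2/3$ enters critically, is twofold: first, calibrating the entropy functional so that the nonlinear-diffusion dissipation strictly dominates the chemotactic cross term throughout the range $m\geq 2/3$ (the precise choice of $\Psi$ and the use of (D4)--(D6) is delicate here); second, identifying the weak limit $D(n_\varepsilon)\nabla n_\varepsilon\rightharpoonup D(n)\nabla n$, which is achieved by working throughout with the variable $n^{m/2}$ and combining a.e.\ convergence of $n_\varepsilon$ with weak $L^2$ convergence of $\nabla n_\varepsilon^{m/2}$. The Navier--Stokes convective term $(u\cdot\nabla)u$ is then handled in the standard Leray fashion using the $L^\infty_t L^2_x\cap L^2_t H^1_x$ bound on $u_\varepsilon$.
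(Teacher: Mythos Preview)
Your overall architecture---regularize, derive an entropy--energy inequality, extract compactness via Aubin--Lions, pass to the limit---matches the paper. But two points are genuinely wrong, and one of them is exactly the place where the threshold $m=\tfrac{2}{3}$ lives.

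\textbf{Where $m\geq\tfrac{2}{3}$ actually enters.} You locate the critical exponent in ``calibrating the entropy functional so that the nonlinear-diffusion dissipation strictly dominates the chemotactic cross term''. This is not correct: the entropy computation for $\int n\log n+\tfrac12\int|\nabla\Psi(c)|^2$ and the cancellation of the chemotactic cross term are taken verbatim from Winkler's $m=1$ paper and do not depend on $m$ at all (Lemma~\ref{Lem energy I} here). The constraint $m\geq\tfrac{2}{3}$ appears only when you couple in the fluid energy and must absorb the forcing $\int_\Omega n_\varepsilon\nabla\Phi\cdot u_\varepsilon$. One estimates $\|n_\varepsilon\|_{L^{6/5}}\|u_\varepsilon\|_{L^6}\leq C\|n_\varepsilon\|_{L^{6/5}}\|\nabla u_\varepsilon\|_{L^2}$ and then controls $\|n_\varepsilon\|_{L^{6/5}}$ by Gagliardo--Nirenberg in terms of $\|\nabla(n_\varepsilon+\varepsilon)^{m/2}\|_{L^2}^{1/(3m-1)}$; the Young-inequality absorption into $\int(n_\varepsilon+\varepsilon)^{m-2}|\nabla n_\varepsilon|^2$ then requires $\tfrac{1}{3m-1}\leq 1$, i.e.\ $m\geq\tfrac{2}{3}$ (Lemma~\ref{Lem energy I1}). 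So the threshold is a fluid--density coupling issue, not a chemotaxis one.

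\textbf{The interpolation exponent.} Your claimed space-time integrability $n_\varepsilon\in L^{5m/3}$ is wrong. From $n_\varepsilon\in L^\infty(L^1)$ and $\nabla n_\varepsilon^{m/2}\in L^2(L^2)$ in three dimensions one obtains $n_\varepsilon\in L^{(3m+2)/3}=L^{m+2/3}$ (the two formulas agree only at $m=1$). At $m=\tfrac{2}{3}$ your exponent gives $L^{10/9}$, which is \emph{not} enough to place $n\chi(c)\nabla c$ in $L^1$ against $\nabla c\in L^4$; the correct exponent $L^{4/3}$ is exactly what is needed and is what the paper proves (Lemma~\ref{Lem spati-temporal est.}).

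\textbf{A smaller omission.} The paper also regularizes the chemotactic flux via $F_\varepsilon(s)=\varepsilon^{-1}\ln(1+\varepsilon s)$, so that $n_\varepsilon F'_\varepsilon(n_\varepsilon)\leq\varepsilon^{-1}$; this is what makes global existence of the \emph{approximate} classical solutions tractable (Lemma~\ref{Lem global exi.}). Your sentence ``standard semigroup and parabolic theory then yield global classical solutions'' hides a nontrivial bootstrap that relies on this extra cutoff.
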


\begin{rem} (i) If the diffusion function $D(u)\equiv1$ in (\ref{CNS}), this is consistent with the result of \cite{winkler2014global}.

(ii) Theorem \ref{main result} shows that the model (\ref{CNS})-(\ref{initial c}) possesses global weak solution even when the diffusion effect is rather mild. However, we have to leave open here whether the lower bound of diffusion exponent $m=\frac{2}{3}$ is optimal to guarantee global weak solvability.
\end{rem}

The rest of this paper is organized as follows. In Section 2, we introduce a family of regularized problems and give some preliminary properties. Based on an energy-type inequality, a priori estimates are given in Section 3. Section 4 is devoted to showing the global existence of the regularized problems. In Section 5, we further establish some $\varepsilon$-independent estimates.   Finally, we give the proof of the main result in Section 6.

{\it Notations.} Throughout the paper, for any vectors $v\in \mathbb{R}^3$ and $w\in \mathbb{R}^3$, we denote by $v\otimes w$ the matrix $A_{3\times3}$ with $a_{ij}=v_iw_j$ for $i,j\in\{1,2,3\}$. We set $L\log L(\Omega)$ is the standard Orlicz space and $L_{\sigma}^2(\Omega):=\left\{\varphi\in L^2(\Omega)|\nabla\cdot\varphi=0\right\}$ denotes the Hilbert space of all solenoidal vector in $L^2(\Omega)$. As usual $\mathcal {P}$ denotes the Helmholtz projection in $L^2(\Omega)$. We write $W_{0, \sigma}^{1,2}(\Omega):=W_{0}^{1,2}(\Omega)\cap L_{\sigma}^2(\Omega)$ and $C_{0,\sigma}^{\infty}(\Omega):=C_{0}^{\infty}(\Omega)\cap L_{\sigma}^2(\Omega)$. We represent $A$ as the realization of Stokes operator $-\mathcal {P}\Delta$ in $L_{\sigma}^2(\Omega)$ with domain $D(A):=W^{2,2}(\Omega)\cap W_{0, \sigma}^{1,2}(\Omega)$. Also $n(\cdot,t)$, $c(\cdot,t)$ and $u(\cdot,t)$ will be denoted sometimes by $n(t)$, $c(t)$ and $u(t)$.

\section{Regularized problem}
Our intention is to construct a global weak solution as the limit of smooth solutions of appropriately regularized problems. According to the idea from \cite{winkler2014global} (see also \cite{Tao&Winkler-AIHP-2013}), let us first consider the approximate problems
\begin{eqnarray}\label{RegularizedCNS}
  \left\{\begin{array}{lll}
     \medskip
     n_{\varepsilon t}+u_{\varepsilon}\cdot\nabla n_{\varepsilon}=\nabla\cdot\left(D_{\varepsilon}(n_{\varepsilon})\nabla n_{\varepsilon}\right) -\nabla\cdot(n_{\varepsilon}F'_{\varepsilon}(n_{\varepsilon})\chi(c_{\varepsilon})\nabla c_{\varepsilon}),&{} x\in\Omega,\ t>0,\\
     \medskip
     c_{\varepsilon t}+u_{\varepsilon}\cdot\nabla c_{\varepsilon}=\Delta c_{\varepsilon}-F_{\varepsilon}(n_{\varepsilon})f(c_{\varepsilon}),&{}x\in\Omega,\ t>0,\\
     \medskip
     u_{\varepsilon t}+\kappa(Y_{\varepsilon}u_{\varepsilon}\cdot\nabla)u_{\varepsilon}=\Delta u_{\varepsilon}+\nabla P_{\varepsilon}+n_{\varepsilon}\nabla\Phi ,&{}x\in\Omega,\ t>0,\\
     \medskip
     \nabla\cdot u_{\varepsilon}=0, &{}x\in\Omega,\ t>0,\\
     \medskip
     \frac{\partial n_{\varepsilon}}{\partial\nu}=\frac{\partial c_{\varepsilon}}{\partial\nu}=0,\quad u_{\varepsilon}=0,&{} x\in\partial\Omega,\ t>0,\\
     \medskip
     n_{\varepsilon}(x,0)=n_{0\varepsilon}(x),\quad c_{\varepsilon}(x,0)=c_{0\varepsilon}(x),\quad u_{\varepsilon}(x,0)=u_{0\varepsilon}(x),&{} x\in\Omega
  \end{array}\right.
\end{eqnarray}
for $\varepsilon\in(0,1)$, where the approximate initial data $n_{0\varepsilon}\geq0$, $c_{0\varepsilon}\geq0$ and $u_{0\varepsilon}$ satisfy
\begin{eqnarray}\label{R initial1}
  \left\{\begin{array}{lll}
     \medskip
     n_{0\varepsilon}\in C_0^{\infty}(\Omega),\\
     \medskip
     \int_{\Omega}n_{0\varepsilon}=\int_{\Omega}n_{0},\\
     \medskip
     n_{0\varepsilon}\rightarrow n_0,\quad \varepsilon\rightarrow0\quad\mbox{in}\ L\log L(\Omega),
  \end{array}\right.
\end{eqnarray}
\begin{eqnarray}\label{R initial2}
  \left\{\begin{array}{lll}
     \medskip
     \sqrt{c_{0\varepsilon}}\in C_0^{\infty}(\Omega),\\
     \medskip
     \|c_{0\varepsilon}\|_{L^{\infty}(\Omega)}\leq \|c_{0}\|_{L^{\infty}(\Omega)},\\
     \medskip
    \sqrt{c_{0\varepsilon}}\rightarrow \sqrt{c_{0}},\quad \varepsilon\rightarrow0\quad\mbox{a.e. in}\ \Omega\ \mbox{and in}\ W^{1,2}(\Omega),
  \end{array}\right.
\end{eqnarray}
and
\begin{eqnarray}\label{R initial3}
  \left\{\begin{array}{lll}
     \medskip
     u_{0\varepsilon}\in C_{0,\sigma}^{\infty}(\Omega),\\
     \medskip
     \|u_{0\varepsilon}\|_{L^{2}(\Omega)}=\|u_{0}\|_{L^{2}(\Omega)},\\
     \medskip
     u_{0\varepsilon}\rightarrow u_0,\quad \varepsilon\rightarrow0\quad\mbox{in}\ L^2(\Omega).
  \end{array}\right.
\end{eqnarray}
The approximate functions in (\ref{RegularizedCNS}) can be chosen as
\begin{equation}\label{RD}
D_{\varepsilon}(s):=D(s+\varepsilon),\quad\mbox{for all}\ s\geq0,
\end{equation}
\begin{equation*}
F_{\varepsilon}(s):=\frac{1}{\varepsilon}\ln(1+\varepsilon s),\quad\mbox{for all}\ s\geq0,
\end{equation*}
and the standard Yosida approximate $Y_{\varepsilon}$ (\cite{Sohr-2001}) is defined by
\begin{equation*}
Y_{\varepsilon}v:=(1+\varepsilon A)^{-1}v,\quad\mbox{for all}\ v\in L^2_{\sigma}(\Omega).
\end{equation*}
It is easy to verify our choice of $F_{\varepsilon}$ above guarantees that for each $\varepsilon\in(0,1)$
\begin{equation}\label{RF1}
0\leq F'_{\varepsilon}(s)=\frac{1}{1+\varepsilon s}\leq1,\quad\mbox{for all}\ s\geq0,
\end{equation}
\begin{equation}\label{RF2}
sF'_{\varepsilon}(s)=\frac{s}{1+\varepsilon s}\leq\frac{1}{\varepsilon},\quad\mbox{for all}\ s\geq0,
\end{equation}
\begin{equation}\label{RF3}
0\leq F_{\varepsilon}(s)\leq s,\quad\mbox{for all}\ s\geq0,
\end{equation}
and
\begin{equation*}
F_{\varepsilon}(s)\rightarrow1,\quad F'_{\varepsilon}(s)\rightarrow1,\quad\ \mbox{as}\ \varepsilon\rightarrow0\quad\mbox{for all}\ s\geq0.
\end{equation*}

The first lemma concerns the local solvability of the approximate problems (\ref{RegularizedCNS}). The proof is based on well-established methods involving the Schauder fixed point theorem, the standard regularity theory of parabolic equation and the Stokes system (for details see \cite{Winkler-CPDE-2012,winkler2014global,Tao&Winkler-AIHP-2013}, for instance).
\begin{lem}\label{Lem Local sol.}
For any $\varepsilon\in(0,1)$, there exist a maximal existence time $T_{\max,\varepsilon}\in (0,\infty]$ and determined functions $n_{\varepsilon}>0$, $c_{\varepsilon}>0$ and $u_{\varepsilon}$
\begin{eqnarray*}
&&n_{\varepsilon}\in C^0\left(\bar{\Omega}\times[0, T_{\max,\varepsilon})\right)\cap C^{2,1}\left(\bar{\Omega}\times(0, T_{\max,\varepsilon})\right),\\
&&c_{\varepsilon}\in C^0\left(\bar{\Omega}\times[0, T_{\max,\varepsilon})\right)\cap C^{2,1}\left(\bar{\Omega}\times(0, T_{\max,\varepsilon})\right)\quad\mbox{and}\\
&&u_{\varepsilon}\in C^0\left(\bar{\Omega}\times[0, T_{\max,\varepsilon})\right)\cap C^{2,1}\left(\bar{\Omega}\times(0, T_{\max,\varepsilon})\right)
\end{eqnarray*}
such that $(n_{\varepsilon}, c_{\varepsilon}, u_{\varepsilon})$ is a classical solution of (\ref{RegularizedCNS}) in $\Omega\times(0, T_{\max,\varepsilon})$. Moreover, if $T_{\max,\varepsilon}<\infty$, then
\begin{equation*}
\|n_{\varepsilon}(\cdot,t)\|_{L^{\infty}(\Omega)}+\|c_{\varepsilon}(\cdot,t)\|_{W^{1,q}(\Omega)}
+\|u_{\varepsilon}(\cdot,t)\|_{D(A^{\alpha})}\rightarrow\infty,\quad t\rightarrow T_{\max,\varepsilon}
\end{equation*}
for all $q>3$ and $\alpha>\frac{3}{4}$.
\end{lem}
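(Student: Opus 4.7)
The plan is a Schauder fixed-point argument on a decoupled, linearized version of (\ref{RegularizedCNS}), followed by parabolic bootstrapping and a standard continuation argument. The regularizations are designed so that every structural obstruction becomes tame: $D_\varepsilon(s)=D(s+\varepsilon)$ is smooth with $D_\varepsilon(s)\geq D_1\varepsilon^{m-1}>0$ uniformly on $[0,\infty)$; $F_\varepsilon'$ is bounded by $1$ (see (\ref{RF1})); and $Y_\varepsilon v\in D(A)$ with $\|Y_\varepsilon v\|_{D(A)}\leq\varepsilon^{-1}\|v\|_{L^2}$, so the Navier-Stokes convection becomes a bounded forcing whenever $\bar u\in W^{1,2}$.

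Concretely, I would fix $T\in(0,1)$ and a large $R>0$ and consider the closed convex set
\[
S_T:=\Big\{(\bar n,\bar c,\bar u)\in C^0([0,T];L^2(\Omega))^{5}\,\Big|\,\|\bar n\|_{L^\infty}+\|\bar c\|_{L^\infty}+\sup_{t\in[0,T]}\|\bar u(t)\|_{W^{1,2}}\leq R,\ \bar n,\bar c\geq0\Big\},
\]
and define $\Psi:S_T\to C^0([0,T];L^2(\Omega))^{5}$ by $\Psi(\bar n,\bar c,\bar u):=(n,c,u)$, where, with the initial and boundary data of (\ref{RegularizedCNS}), $c$ solves the linear parabolic equation $c_t+\bar u\cdot\nabla c=\Delta c-F_\varepsilon(\bar n)f(c)$; $u$ solves the linear Stokes problem $u_t=\Delta u+\nabla P+\bar n\nabla\Phi-\kappa(Y_\varepsilon\bar u\cdot\nabla)\bar u$, $\nabla\cdot u=0$; and $n$ solves the linear parabolic equation $n_t+\bar u\cdot\nabla n=\nabla\cdot(D_\varepsilon(\bar n)\nabla n)-\nabla\cdot(nF_\varepsilon'(\bar n)\chi(\bar c)\nabla\bar c)$. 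Each of these three problems has smooth bounded coefficients and sources in $L^\infty(0,T;L^2)$, so parabolic $L^p$-theory and the analytic semigroup theory for the Stokes operator produce unique strong solutions with enough extra regularity to guarantee both compactness (via $W^{1,p}\hookrightarrow\hookrightarrow L^2$) and the self-mapping $\Psi(S_T)\subset S_T$ for sufficiently small $T$. Continuity of $\Psi$ in $C^0([0,T];L^2)$ is standard continuous dependence, so Schauder's theorem delivers a fixed point $(n_\varepsilon,c_\varepsilon,u_\varepsilon)\in S_T$.

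A bootstrap argument then upgrades this fixed point to a classical solution. The $C^0$ bounds on the coefficients lift to H\"older regularity by parabolic $L^p$-theory, after which Schauder estimates yield $n_\varepsilon,c_\varepsilon,u_\varepsilon\in C^{2,1}(\bar\Omega\times(0,T))$; strict positivity follows from the strong parabolic maximum principle (using $f(0)=0$ for $c_\varepsilon$). Extending to a maximal interval $(0,T_{\max,\varepsilon})$ is a routine successive-prolongation argument. The extensibility criterion is then verified by contradiction: if $T_{\max,\varepsilon}<\infty$ while the listed norms remain bounded as $t\uparrow T_{\max,\varepsilon}$, Sobolev embedding gives uniform control of $\|c_\varepsilon(\cdot,t)\|_{C^{0,\beta}}$ (via $W^{1,q}(\Omega)$ with $q>3$) and $\|u_\varepsilon(\cdot,t)\|_{C^{0,\beta}}$ (via $D(A^\alpha)$ with $\alpha>3/4$), which are exactly the quantities needed to restart the fixed-point argument with a uniform existence time anchored at some $t_0<T_{\max,\varepsilon}$, contradicting maximality. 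The main delicate point in a rigorous write-up is the quasilinear structure of the $n$-equation---one must verify that $D_\varepsilon(n_\varepsilon)$ is H\"older continuous so that Ladyzhenskaya-Solonnikov-Ural\-ceva-type classical theory applies---but thanks to the uniform positivity of $D_\varepsilon$ and the smoothness of $D$ away from zero, no essentially new difficulty arises and the argument reduces to quoting the references cited by the authors.
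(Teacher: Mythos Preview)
Your proposal is correct and follows precisely the route the paper indicates: the authors do not write out a proof at all but merely state that it ``is based on well-established methods involving the Schauder fixed point theorem, the standard regularity theory of parabolic equation and the Stokes system'' with references to \cite{Winkler-CPDE-2012,winkler2014global,Tao&Winkler-AIHP-2013}. Your sketch is a faithful and reasonably detailed unpacking of exactly this strategy, and the care you take with the quasilinear $n$-equation (uniform positivity of $D_\varepsilon$ enabling classical Schauder theory) is appropriate.
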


The following estimates of $n_{\varepsilon}$ and $c_{\varepsilon}$ are basic but important in the proof of our result.
\begin{lem}\label{Lem L1 of u c}
For each $\varepsilon\in(0,1)$, we have
\begin{equation}\label{est. u L1}
\int_{\Omega}n_{\varepsilon}(\cdot, t)=\int_{\Omega}n_0\quad\mbox{for all}\ t\in(0, T_{\max,\varepsilon})
\end{equation}
and
\begin{equation}\label{est. c L infite}
\|c_{\varepsilon}(\cdot, t)\|_{L^{\infty}(\Omega)}\leq\|c_0\|_{L^{\infty}(\Omega)}=:M\quad\mbox{in}\ \Omega\times(0, T_{\max,\varepsilon}).
\end{equation}
\end{lem}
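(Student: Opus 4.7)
For the mass conservation identity \eqref{est. u L1}, the plan is simply to integrate the first equation of \eqref{RegularizedCNS} over $\Omega$ and exploit the boundary conditions together with the divergence-free property of $u_\varepsilon$. Concretely, since $\nabla\cdot u_\varepsilon=0$ in $\Omega$ and $u_\varepsilon=0$ on $\partial\Omega$, one has $\int_\Omega u_\varepsilon\cdot\nabla n_\varepsilon=0$. The diffusion term $\nabla\cdot(D_\varepsilon(n_\varepsilon)\nabla n_\varepsilon)$ and the chemotactic term $\nabla\cdot(n_\varepsilon F'_\varepsilon(n_\varepsilon)\chi(c_\varepsilon)\nabla c_\varepsilon)$ both vanish upon integration, thanks to the no-flux conditions $\partial_\nu n_\varepsilon=\partial_\nu c_\varepsilon=0$ on $\partial\Omega$ (which, together with $D_\varepsilon(n_\varepsilon)>0$, make all normal boundary fluxes zero). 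Hence $\frac{d}{dt}\int_\Omega n_\varepsilon(\cdot,t)=0$, and \eqref{R initial1} yields $\int_\Omega n_\varepsilon(\cdot,t)=\int_\Omega n_{0\varepsilon}=\int_\Omega n_0$.

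For the $L^\infty$ bound \eqref{est. c L infite}, the strategy is to apply the parabolic maximum principle to the second equation of \eqref{RegularizedCNS}. Observe that this equation can be written as
\begin{equation*}
c_{\varepsilon t}-\Delta c_{\varepsilon}+u_\varepsilon\cdot\nabla c_\varepsilon+F_\varepsilon(n_\varepsilon)f(c_\varepsilon)=0,
\end{equation*}
a linear parabolic equation in $c_\varepsilon$ with smooth coefficients and a non-positive reaction source, because \eqref{RF3} gives $F_\varepsilon(n_\varepsilon)\geq 0$ and \eqref{D3} together with $f(0)=0$ ensures $f(c_\varepsilon)\geq 0$ as long as $c_\varepsilon\geq 0$. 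Comparing $c_\varepsilon$ with the spatially constant supersolution $\bar c(t)\equiv\|c_{0\varepsilon}\|_{L^\infty(\Omega)}$, which satisfies the same Neumann condition and dominates the data by \eqref{R initial2}, the standard parabolic comparison principle (valid because of the no-flux condition $\partial_\nu c_\varepsilon=0$) yields $c_\varepsilon\leq\|c_{0\varepsilon}\|_{L^\infty(\Omega)}\leq\|c_0\|_{L^\infty(\Omega)}=M$.

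No step should present a genuine obstacle: the only mild care is in checking that the chemotactic flux produces no boundary contribution, which follows from $\partial_\nu c_\varepsilon=0$ regardless of the sign of $F'_\varepsilon(n_\varepsilon)\chi(c_\varepsilon)n_\varepsilon$, and in justifying the comparison principle for $c_\varepsilon$, for which the classical regularity supplied by Lemma \ref{Lem Local sol.} suffices.
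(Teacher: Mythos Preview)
Your proposal is correct and follows essentially the same approach as the paper: integrate the first equation using the boundary conditions and \eqref{R initial1} to get \eqref{est. u L1}, and apply the maximum principle to the second equation, using $F_\varepsilon\geq 0$ from \eqref{RF3} and $f\geq 0$ from \eqref{D3}, to obtain \eqref{est. c L infite}. The paper states this in two sentences without spelling out the boundary-flux and comparison details you supply, but the argument is the same.
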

\begin{proof}
Integrating the first equation in (\ref{RegularizedCNS}) and using (\ref{R initial1}), we obtain (\ref{est. u L1}). Since $f\geq0$ by our assumption (\ref{D3}) and $F_{\varepsilon}\geq0$ by (\ref{RF3}), an application of the maximum principle to the second equation in (\ref{RegularizedCNS}) gives (\ref{est. c L infite}).
\end{proof}

\section{An energy-type inequality}
In this section, we shall utilize an energy inequality associated with the first two equations in (\ref{RegularizedCNS}) to establish a priori estimates. The inequality is frequently used in the literature (see \cite{Duan&Lorz&Markowich-CPDE-2010,Tao&Winkler-AIHP-2013,winkler2014global,Winkler-ARMA-2014}, for example) and it also will play an important role in our proof.
\begin{lem}\label{Lem energy I}
Let (\ref{D1})-(\ref{D6}) hold. There exists $K\geq1$ such that for any $\varepsilon\in(0,1)$, the solution of (\ref{RegularizedCNS}) satisfies
\begin{eqnarray}\label{energy I}
&&\frac{d}{dt}\left\{\int_{\Omega}n_{\varepsilon}\ln n_{\varepsilon}+\frac{1}{2}\int_{\Omega}|\nabla\Psi(c_{\varepsilon})|^2\right\}
     +\frac{1}{K}\left\{\int_{\Omega}\frac{D_{\varepsilon}(n_{\varepsilon})}{n_{\varepsilon}}|\nabla n_{\varepsilon}|^2+\int_{\Omega}\frac{|D^2c_{\varepsilon}|^2}{c_{\varepsilon}}+\int_{\Omega}\frac{|\nabla c_{\varepsilon}|^4}{c_{\varepsilon}^3}\right\}\nonumber\\
&&\leq K\int_{\Omega}|\nabla u_{\varepsilon}|^2\quad\mbox{for all}\ t\in(0, T_{\max,\varepsilon}),
\end{eqnarray}
where $\Psi(s):=\int^s_1\frac{d\sigma}{\sqrt{g(\sigma)}}$ with $g(s):=\frac{f(s)}{\chi(s)}$.
\end{lem}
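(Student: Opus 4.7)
The plan is to derive identities for $\frac{d}{dt}\int n_\varepsilon\ln n_\varepsilon$ and $\frac{d}{dt}\frac{1}{2}\int|\nabla\Psi(c_\varepsilon)|^2$ separately, sum them, use (\ref{D4})--(\ref{D6}) to cancel the chemotaxis cross terms, and exploit convexity of $\Omega$ to identify the dissipation $\int|D^2 c_\varepsilon|^2/c_\varepsilon$ and $\int|\nabla c_\varepsilon|^4/c_\varepsilon^3$, with the fluid-dependent terms absorbed into $\int|\nabla u_\varepsilon|^2$. Multiplying the first equation of (\ref{RegularizedCNS}) by $\ln n_\varepsilon+1$, the fluid-transport contribution vanishes since $\nabla\cdot u_\varepsilon=0$ and $u_\varepsilon|_{\partial\Omega}=0$, and integration by parts yields
$$\frac{d}{dt}\int n_\varepsilon\ln n_\varepsilon=-\int\frac{D_\varepsilon(n_\varepsilon)}{n_\varepsilon}|\nabla n_\varepsilon|^2+\int F'_\varepsilon(n_\varepsilon)\chi(c_\varepsilon)\nabla n_\varepsilon\cdot\nabla c_\varepsilon.$$
For the second identity, $\partial_\nu\Psi(c_\varepsilon)=\Psi'(c_\varepsilon)\partial_\nu c_\varepsilon=0$ allows integration by parts, giving $\frac{d}{dt}\frac{1}{2}\int|\nabla\Psi(c_\varepsilon)|^2=-\int\Psi'(c_\varepsilon)c_{\varepsilon t}\Delta\Psi(c_\varepsilon)$; inserting the $c$-equation and applying the decompositions $\Psi'(c)\Delta c=\Delta\Psi(c)-\Psi''(c)|\nabla c|^2$ and $\Psi'(c)u\cdot\nabla c=u\cdot\nabla\Psi(c)$ produces the four terms
$$-\int(\Delta\Psi(c_\varepsilon))^2,\quad \int\Psi''(c_\varepsilon)|\nabla c_\varepsilon|^2\Delta\Psi(c_\varepsilon),\quad \int\Psi'(c_\varepsilon)F_\varepsilon(n_\varepsilon)f(c_\varepsilon)\Delta\Psi(c_\varepsilon),\quad \int u_\varepsilon\cdot\nabla\Psi(c_\varepsilon)\Delta\Psi(c_\varepsilon).$$

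The critical cancellation proceeds as follows: integrating the $F_\varepsilon(n_\varepsilon)f(c_\varepsilon)$-term by parts, and using $\Psi'(c)f(c)=\sqrt{f\chi}$, $[\Psi'(c)]^2 f(c)=\chi(c)$, combines with the chemotaxis cross term from the $n$-identity to cancel the $\nabla n_\varepsilon\cdot\nabla c_\varepsilon$ pieces exactly, leaving, via the algebraic identity $\chi'+\frac{g'\chi}{2g}=\frac{(\chi f)'}{2f}$, the contribution
$$-\int F_\varepsilon(n_\varepsilon)\frac{(\chi f)'(c_\varepsilon)}{2f(c_\varepsilon)}|\nabla c_\varepsilon|^2 \leq 0$$
by (\ref{D6}). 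Observe also that (\ref{D4}) gives $\Psi'>0$ and (\ref{D5}) gives $\Psi''\leq 0$, which are used below.

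Convexity of $\Omega$ then extracts the targeted dissipation: combining the scalar Bochner identity $\tfrac{1}{2}\Delta|\nabla\Psi(c_\varepsilon)|^2=|D^2\Psi(c_\varepsilon)|^2+\nabla\Psi(c_\varepsilon)\cdot\nabla\Delta\Psi(c_\varepsilon)$ with the classical boundary inequality $\partial_\nu|\nabla\Psi(c_\varepsilon)|^2\leq 0$ on $\partial\Omega$ (valid under $\partial_\nu\Psi(c_\varepsilon)=0$ and convexity), together with integration by parts, one obtains $\int(\Delta\Psi(c_\varepsilon))^2\geq\int|D^2\Psi(c_\varepsilon)|^2$. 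The pointwise identity
$$|D^2\Psi(c)|^2=[\Psi'(c)]^2|D^2 c|^2+2\Psi'(c)\Psi''(c)(D^2 c)(\nabla c,\nabla c)+[\Psi''(c)]^2|\nabla c|^4,$$
combined with Young applied to the cross term and with the comparisons $g(c)\sim c$, $[\Psi'(c)]^2\sim c^{-1}$, $[\Psi''(c)]^2\sim c^{-3}$ on $[0,M]$ (consequences of (\ref{D4})--(\ref{D5}) and $\|c_\varepsilon\|_\infty\leq M$ from Lemma~\ref{Lem L1 of u c}), yields a lower bound $|D^2\Psi(c_\varepsilon)|^2\gtrsim\frac{|D^2 c_\varepsilon|^2}{c_\varepsilon}+\frac{|\nabla c_\varepsilon|^4}{c_\varepsilon^3}$, modulo terms absorbable into itself.

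It remains to absorb the two non-dissipative terms. Young's inequality splits $\int\Psi''(c_\varepsilon)|\nabla c_\varepsilon|^2\Delta\Psi(c_\varepsilon)$ as $\eta\int(\Delta\Psi(c_\varepsilon))^2+C_\eta\int[\Psi''(c_\varepsilon)]^2|\nabla c_\varepsilon|^4\lesssim \eta\int(\Delta\Psi(c_\varepsilon))^2+C_\eta\int|\nabla c_\varepsilon|^4/c_\varepsilon^3$, both absorbable for $\eta$ small. For the fluid term, Young gives $\eta\int(\Delta\Psi(c_\varepsilon))^2+C_\eta\int|u_\varepsilon|^2|\nabla\Psi(c_\varepsilon)|^2$; Hölder together with the Sobolev embedding $\|u_\varepsilon\|_{L^6}\leq C\|\nabla u_\varepsilon\|_{L^2}$ (from $u_\varepsilon|_{\partial\Omega}=0$) and the interpolation $\|\nabla\Psi(c_\varepsilon)\|_{L^3}^2\leq\|\nabla\Psi(c_\varepsilon)\|_{L^2}^{2/3}\|\nabla\Psi(c_\varepsilon)\|_{L^4}^{4/3}$, with $\|\nabla\Psi(c_\varepsilon)\|_{L^4}^4\lesssim\int|\nabla c_\varepsilon|^4/c_\varepsilon^3$, then distributes $C_\eta\int|u_\varepsilon|^2|\nabla\Psi(c_\varepsilon)|^2$ between $K\int|\nabla u_\varepsilon|^2$, a further fraction of $\int|\nabla c_\varepsilon|^4/c_\varepsilon^3$, and lower-order quantities controlled by (\ref{est. c L infite}) and the energy itself. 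Choosing $\eta$ small and $K$ sufficiently large delivers (\ref{energy I}). The main obstacle will be the careful algebraic bookkeeping of $|D^2\Psi(c)|^2$ and the multi-step absorption, ensuring that all coefficients on the LHS remain strictly positive and all constants $\varepsilon$-independent.
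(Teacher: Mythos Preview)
Your overall strategy---testing the $n$-equation against $\ln n_\varepsilon+1$, computing $\frac{d}{dt}\frac12\int|\nabla\Psi(c_\varepsilon)|^2$, exploiting the algebraic identity $[\Psi'(c)]^2f(c)=\chi(c)$ to cancel the cross term, and using convexity---matches the route taken in \cite[Lemmas~3.1--3.4]{winkler2014global}, to which the paper defers. However, two of your technical steps do not go through as written.

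\textbf{(i) The lower bound for $\int|D^2\Psi(c_\varepsilon)|^2$.} From the pointwise expansion
\[
|D^2\Psi|^2=[\Psi']^2|D^2c|^2+2\Psi'\Psi''\,(D^2c)(\nabla c,\nabla c)+[\Psi'']^2|\nabla c|^4,
\]
Young's inequality on the middle term gives only
\(
|D^2\Psi|^2\ge(1-\lambda)[\Psi']^2|D^2c|^2+(1-\tfrac{1}{\lambda})[\Psi'']^2|\nabla c|^4,
\)
and no choice of $\lambda>0$ makes both coefficients positive. Hence a pointwise estimate cannot yield $|D^2\Psi|^2\gtrsim|D^2c|^2/c+|\nabla c|^4/c^3$. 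What is actually needed (and used in \cite{winkler2014global}) is the \emph{integral} functional inequality
\[
\int_\Omega\frac{|\nabla c_\varepsilon|^4}{c_\varepsilon^3}\le C\int_\Omega\frac{|D^2c_\varepsilon|^2}{c_\varepsilon},
\]
valid on convex domains with $\partial_\nu c_\varepsilon=0$; this lets you first secure a definite fraction of $\int|D^2c_\varepsilon|^2/c_\varepsilon$ and then recover $\int|\nabla c_\varepsilon|^4/c_\varepsilon^3$ a posteriori.

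\textbf{(ii) The fluid term.} Bounding $\int u_\varepsilon\cdot\nabla\Psi\,\Delta\Psi$ by $\eta\int(\Delta\Psi)^2+C_\eta\int|u_\varepsilon|^2|\nabla\Psi|^2$ and then invoking $\|u_\varepsilon\|_{L^6}\le C\|\nabla u_\varepsilon\|_{L^2}$ leaves you with $C\|\nabla u_\varepsilon\|_{L^2}^2\|\nabla\Psi\|_{L^2}^{2/3}\|\nabla\Psi\|_{L^4}^{4/3}$; no Young splitting of this triple product produces $K\int|\nabla u_\varepsilon|^2$ without generating $\|\nabla u_\varepsilon\|_{L^2}^{2p}$ with $p>1$, which is not permitted on the right-hand side of (\ref{energy I}). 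The correct move is to integrate by parts first, using $\nabla\cdot u_\varepsilon=0$ and $u_\varepsilon|_{\partial\Omega}=0$, to obtain
\[
\int_\Omega u_\varepsilon\cdot\nabla\Psi(c_\varepsilon)\,\Delta\Psi(c_\varepsilon)
= -\int_\Omega \nabla u_\varepsilon:\big(\nabla\Psi(c_\varepsilon)\otimes\nabla\Psi(c_\varepsilon)\big),
\]
after which H\"older and Young give directly $\le \delta\|\nabla\Psi(c_\varepsilon)\|_{L^4}^4+C_\delta\|\nabla u_\varepsilon\|_{L^2}^2$, with $\|\nabla\Psi(c_\varepsilon)\|_{L^4}^4\le C\int|\nabla c_\varepsilon|^4/c_\varepsilon^3$ absorbable on the left.

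With these two repairs your outline coincides with the argument in \cite{winkler2014global}.
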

\begin{proof}
The proof is based on the first two equations in (\ref{RegularizedCNS}) and integration by parts and detailed computations can be found in \cite[Lemmas 3.1-3.4]{winkler2014global}.
\end{proof}

Based on Lemma \ref{Lem energy I}, we can modify the above energy-type inequality (\ref{energy I}) to contain all components of $n_{\varepsilon}$, $c_{\varepsilon}$ and $u_{\varepsilon}$.
\begin{lem}\label{Lem energy I1}
Let $\Psi$ be as given by Lemma \ref{Lem energy I} and suppose that (\ref{D1})-(\ref{D6}) hold. Then for any $\varepsilon\in(0,1)$, there exists $C>0$ such that
\begin{eqnarray}\label{energy I1}
&&\frac{d}{dt}\left\{\int_{\Omega}n_{\varepsilon}\ln n_{\varepsilon}+\frac{1}{2}\int_{\Omega}|\nabla\Psi(c_{\varepsilon})|^2
     +K\int_{\Omega}|u_{\varepsilon}|^2\right\}\nonumber\\
     &&\qquad+\frac{1}{2K}\left\{D_1\int_{\Omega}(n_{\varepsilon}+\varepsilon)^{m-2}|\nabla n_{\varepsilon}|^2+\int_{\Omega}\frac{|D^2c_{\varepsilon}|^2}{c_{\varepsilon}}+\int_{\Omega}\frac{|\nabla c_{\varepsilon}|^4}{c_{\varepsilon}^3}+\int_{\Omega}|\nabla u_{\varepsilon}|^2\right\}\nonumber\\
&&\leq C\quad\mbox{for all}\ t\in(0, T_{\max,\varepsilon}),
\end{eqnarray}
where $D_1$ and $K$ are constants provided by (\ref{D2}) and Lemma \ref{Lem energy I}, respectively.
\end{lem}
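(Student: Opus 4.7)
The plan is to adjoin the standard $L^2$ energy identity for the fluid component to Lemma~\ref{Lem energy I} and to absorb the resulting coupling $\int_\Omega n_\varepsilon u_\varepsilon\cdot\nabla\Phi$ into the dissipation. First I would test the $u_\varepsilon$-equation in (\ref{RegularizedCNS}) by $u_\varepsilon$: the pressure integrates out because $\nabla\cdot u_\varepsilon=0$ and the convective term vanishes since $\nabla\cdot(Y_\varepsilon u_\varepsilon)=0$ together with $u_\varepsilon|_{\partial\Omega}=0$, yielding
\begin{equation*}
\frac{1}{2}\frac{d}{dt}\int_\Omega|u_\varepsilon|^2+\int_\Omega|\nabla u_\varepsilon|^2=\int_\Omega n_\varepsilon u_\varepsilon\cdot\nabla\Phi.
\end{equation*}
Multiplying by $2K$ and adding to (\ref{energy I}) cancels the $K\int_\Omega|\nabla u_\varepsilon|^2$ on the right of Lemma~\ref{Lem energy I} and leaves a free $K\int_\Omega|\nabla u_\varepsilon|^2$ on the left. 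Invoking (\ref{D2}) in the form $D_\varepsilon(n_\varepsilon)/n_\varepsilon\ge D_1(n_\varepsilon+\varepsilon)^{m-2}$ then recasts the $n$-dissipation in (\ref{energy I}) as at least $\frac{D_1}{K}\int_\Omega(n_\varepsilon+\varepsilon)^{m-2}|\nabla n_\varepsilon|^2$, which matches the structure demanded by (\ref{energy I1}).

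Next I would control the coupling $2K\int_\Omega n_\varepsilon u_\varepsilon\cdot\nabla\Phi$ by H\"older, the Poincar\'e--Sobolev embedding $\|u_\varepsilon\|_{L^6}\le C\|\nabla u_\varepsilon\|_{L^2}$, and Young's inequality:
\begin{equation*}
2K\left|\int_\Omega n_\varepsilon u_\varepsilon\cdot\nabla\Phi\right|\le \frac{K}{2}\int_\Omega|\nabla u_\varepsilon|^2 + C_1\|n_\varepsilon\|_{L^{6/5}(\Omega)}^2,
\end{equation*}
so that half of the $u$-dissipation is retained on the left. Setting $v_\varepsilon:=(n_\varepsilon+\varepsilon)^{m/2}$ one has $|\nabla v_\varepsilon|^2=\frac{m^2}{4}(n_\varepsilon+\varepsilon)^{m-2}|\nabla n_\varepsilon|^2$, while Lemma~\ref{Lem L1 of u c} ensures that $\|v_\varepsilon\|_{L^{2/m}(\Omega)}^{2/m}=\int_\Omega(n_\varepsilon+\varepsilon)$ stays uniformly bounded. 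The three-dimensional Gagliardo--Nirenberg inequality applied to $v_\varepsilon$, interpolating between $L^{2/m}$ and the embedding $H^1\hookrightarrow L^6$, then yields
\begin{equation*}
\|n_\varepsilon\|_{L^{6/5}}^2\le C\bigl(\|\nabla v_\varepsilon\|_{L^2}^{4a/m}+1\bigr),\qquad a=\frac{m}{2(3m-1)},
\end{equation*}
and the exponent satisfies $4a/m=2/(3m-1)\le 2$ precisely when $m\ge\frac{2}{3}$; a further application of Young's inequality then absorbs this into a fraction of the $n$-dissipation at the cost of an additive constant.

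The principal difficulty lies in this last interpolation, which is sharp at the threshold $m=\frac{2}{3}$: below that exponent the Sobolev-based control of $\|n_\varepsilon\|_{L^{6/5}}$ by $\int(n_\varepsilon+\varepsilon)^{m-2}|\nabla n_\varepsilon|^2$ breaks down, and the coupling can no longer be swallowed by the dissipation. This is the step that forces the standing assumption $m\ge\frac{2}{3}$. Once the absorption is in place (enlarging $K$ if necessary so that the Young constants fit under the coefficient $\frac{D_1}{2K}$), collecting all contributions yields (\ref{energy I1}) with a constant $C>0$ depending on $K$, $D_1$, $\|\nabla\Phi\|_{L^\infty(\Omega)}$, $\|n_0\|_{L^1(\Omega)}$, $|\Omega|$, and the Sobolev and Gagliardo--Nirenberg constants.
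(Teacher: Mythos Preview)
Your proposal is correct and follows essentially the same route as the paper: test the fluid equation by $u_\varepsilon$, add $2K$ times the resulting identity to (\ref{energy I}), lower-bound the diffusion via (\ref{D2}), and absorb the coupling $2K\int_\Omega n_\varepsilon u_\varepsilon\cdot\nabla\Phi$ by combining the Sobolev embedding $W^{1,2}_0\hookrightarrow L^6$ with a Gagliardo--Nirenberg interpolation of $\|n_\varepsilon+\varepsilon\|_{L^{6/5}}$ between $\|\nabla(n_\varepsilon+\varepsilon)^{m/2}\|_{L^2}$ and the conserved $L^1$ mass, the exponent $\frac{1}{3m-1}\le 1$ being exactly the condition $m\ge\frac{2}{3}$. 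The only slip is the phrase ``enlarging $K$ if necessary'': $K$ is fixed by Lemma~\ref{Lem energy I}, but this is harmless since the free parameter in Young's inequality already lets you fit the absorbed term under $\frac{D_1}{2K}$ for any given $K$.
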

\begin{proof}
Multiplying both sides of the third equation in (\ref{RegularizedCNS}) by $u_{\varepsilon}$ and integrating by parts over $\Omega$, we have
\begin{equation*}
\frac{1}{2}\frac{d}{dt}\int_{\Omega}|u_{\varepsilon}|^2+\int_{\Omega}|\nabla u_{\varepsilon}|^2=-\kappa\int_{\Omega}(Y_{\varepsilon}u_{\varepsilon}\cdot\nabla)u_{\varepsilon}\cdot u_{\varepsilon}+\int_{\Omega}n_{\varepsilon}\nabla\Phi\cdot u_{\varepsilon}\quad\mbox{for all}\ t\in(0, T_{\max,\varepsilon}).
\end{equation*}
Since $\nabla\cdot u_{\varepsilon}=0$ implies $\nabla\cdot Y_{\varepsilon}u_{\varepsilon}=0$, we thereby obtain
\begin{equation*}
\frac{1}{2}\frac{d}{dt}\int_{\Omega}|u_{\varepsilon}|^2+\int_{\Omega}|\nabla u_{\varepsilon}|^2=\int_{\Omega}n_{\varepsilon}\nabla\Phi\cdot u_{\varepsilon}\quad\mbox{for all}\ t\in(0, T_{\max,\varepsilon}).
\end{equation*}
Substituting this into (\ref{energy I}), we get
\begin{eqnarray}\label{energy I2}
&&\frac{d}{dt}\left\{\int_{\Omega}n_{\varepsilon}\ln n_{\varepsilon}+\frac{1}{2}\int_{\Omega}|\nabla\Psi(c_{\varepsilon})|^2
     +K\int_{\Omega}|u_{\varepsilon}|^2\right\}\nonumber\\
     &&\qquad+\frac{1}{K}\left\{\int_{\Omega}\frac{D_{\varepsilon}(n_{\varepsilon})}{n_{\varepsilon}}|\nabla n_{\varepsilon}|^2+\int_{\Omega}\frac{|D^2c_{\varepsilon}|^2}{c_{\varepsilon}}+\int_{\Omega}\frac{|\nabla c_{\varepsilon}|^4}{c_{\varepsilon}^3}\right\}+K\int_{\Omega}|\nabla u_{\varepsilon}|^2\nonumber\\
&&\leq2K\int_{\Omega}n_{\varepsilon}\nabla\Phi\cdot u_{\varepsilon}\quad\mbox{for all}\ t\in(0, T_{\max,\varepsilon}).
\end{eqnarray}
Using (\ref{D2}) and (\ref{RD}), we have for each $\varepsilon\in(0,1)$
\begin{equation}\label{energy I21}
\int_{\Omega}\frac{D_{\varepsilon}(n_{\varepsilon})}{n_{\varepsilon}}|\nabla n_{\varepsilon}|^2\geq D_1\int_{\Omega}(n_{\varepsilon}+\varepsilon)^{m-2}|\nabla n_{\varepsilon}|^2\quad\mbox{for all}\ t\in(0, T_{\max,\varepsilon}).
\end{equation}
By (\ref{D3}), H\"{o}der's inequality and the embedding $W^{1,2}(\Omega)\hookrightarrow L^{6}(\Omega)$ for $n=3$, we can find $C_1>0$ such that for each $\varepsilon\in(0,1)$
\begin{eqnarray*}
2K\int_{\Omega}n_{\varepsilon}\nabla\Phi\cdot u_{\varepsilon}&\leq&2K\|\Phi\|_{W^{1,\infty}}\|n_{\varepsilon}+\varepsilon\|_{L^{\frac{6}{5}}(\Omega)}\|u_{\varepsilon}\|_{L^{6}(\Omega)}\\
               &\leq&C_1\|n_{\varepsilon}+\varepsilon\|_{L^{\frac{6}{5}}(\Omega)}\|\nabla u_{\varepsilon}\|_{L^{2}(\Omega)}
                     \quad\mbox{for all}\ t\in(0, T_{\max,\varepsilon}).
\end{eqnarray*}
Note that
\begin{equation}\label{n0 L1}
\|(n_{\varepsilon}+\varepsilon)^{\frac{m}{2}}\|_{L^{\frac{2}{m}}(\Omega)}\leq\left(\|n_0\|_{L^{1}(\Omega)}+|\Omega|\right)^{\frac{m}{2}}\quad\mbox{for all}\ t\in(0, T_{\max,\varepsilon})
\end{equation}
by (\ref{R initial1}) and (\ref{est. u L1}). It follows from the Gagliardo-Nirenberg inequality \cite{Winkler-MMAS-2002} that
\begin{eqnarray*}
\left\|n_{\varepsilon}+\varepsilon\right\|_{L^{\frac{6}{5}}(\Omega)}&=&\left\|(n_{\varepsilon}+\varepsilon)^{\frac{m}{2}}\right\|
^{\frac{2}{m}}_{L^{\frac{12}{5m}}(\Omega)}\\
               &\leq&C_2\left(\left\|\nabla (n_{\varepsilon}+\varepsilon)^{\frac{m}{2}}\right\|^{\frac{1}{3m-1}}_{L^{2}(\Omega)}\cdot
                     \left\|(n_{\varepsilon}+\varepsilon)^{\frac{m}{2}}\right\|^{\frac{5m-2}{m(3m-1)}}_{L^{\frac{2}{m}}(\Omega)}
                     +\left\|(n_{\varepsilon}+\varepsilon)^{\frac{m}{2}}\right\|^{\frac{2}{m}}_{L^{\frac{2}{m}}(\Omega)}\right)\\
               &\leq&C_3\left(\left(\int_{\Omega}(n_{\varepsilon}+\varepsilon)^{m-2}|\nabla n_{\varepsilon}|^2\right)^{\frac{1}{2(3m-1)}}+1\right)
                     \quad\mbox{for all}\ t\in(0, T_{\max,\varepsilon})
\end{eqnarray*}
with $C_2>0$ and $C_3>0$. Since $\frac{1}{3m-1}\leq1$ by our assumption $m\geq\frac{2}{3}$ , Young's inequality yields $C_4>0$ such that
\begin{eqnarray}\label{energy I22}
2K\int_{\Omega}n_{\varepsilon}\nabla\Phi\cdot u_{\varepsilon}
          &\leq&C_1C_3\left(\left(\int_{\Omega}(n_{\varepsilon}+\varepsilon)^{m-2}|\nabla n_{\varepsilon}|^2\right)^{\frac{1}{2(3m-1)}}+1\right)
                \cdot\|\nabla u_{\varepsilon}\|_{L^{2}(\Omega)}\nonumber \\
          &\leq&\frac{1}{K}C_1C_3\left(\left(\int_{\Omega}(n_{\varepsilon}+\varepsilon)^{m-2}|\nabla n_{\varepsilon}|^2\right)^{\frac{1}{3m-1}}+1\right)
                +\frac{K}{2}\int_{\Omega}|\nabla u_{\varepsilon}|^2\nonumber \\
          &\leq&\frac{D_1}{2K}\int_{\Omega}(n_{\varepsilon}+\varepsilon)^{m-2}|\nabla n_{\varepsilon}|^2+\frac{K}{2}\int_{\Omega}|\nabla u_{\varepsilon}|^2
                +C_4
\end{eqnarray}
for all $t\in(0, T_{\max,\varepsilon})$. Inequality (\ref{energy I1}) then follows by combining (\ref{energy I2}), (\ref{energy I21}) and (\ref{energy I22}).
\end{proof}

We can now use Lemma \ref{Lem energy I1} to establish a priori estimates of the solution of (\ref{RegularizedCNS}).
\begin{lem}\label{Lem energy I2}
Let $\Psi$ and $K$ be as given by Lemma \ref{Lem energy I}, and assume that the requirements of Lemma \ref{Lem energy I1} are satisfied. Then there exists $C\geq0$ such that for any $\varepsilon\in(0,1)$ we have
\begin{eqnarray}\label{energy I3}
\int_{\Omega}n_{\varepsilon}\ln n_{\varepsilon}+\frac{1}{2}\int_{\Omega}|\nabla\Psi(c_{\varepsilon})|^2+K\int_{\Omega}|u_{\varepsilon}|^2
\leq C\quad\mbox{for all}\ t\in(0, T_{\max,\varepsilon})
\end{eqnarray}
and
\begin{eqnarray}\label{energy I4}
D_1\int_0^T\int_{\Omega}(n_{\varepsilon}+\varepsilon)^{m-2}|\nabla n_{\varepsilon}|^2+\int_0^T\int_{\Omega}\frac{|D^2c_{\varepsilon}|^2}{c_{\varepsilon}}+\int_0^T\int_{\Omega}\frac{|\nabla c_{\varepsilon}|^4}{c_{\varepsilon}^3}+\int_0^T\int_{\Omega}|\nabla u_{\varepsilon}|^2\leq C(T+1)
\end{eqnarray}
for all $T\in(0, T_{\max,\varepsilon})$.
\end{lem}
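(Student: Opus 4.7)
I would read off both conclusions from the energy inequality of Lemma~\ref{Lem energy I1}. Abbreviate
\[
y(t):=\int_{\Omega}n_{\varepsilon}\ln n_{\varepsilon}+\tfrac{1}{2}\int_{\Omega}|\nabla\Psi(c_{\varepsilon})|^2+K\int_{\Omega}|u_{\varepsilon}|^2,
\]
and let $D(t)$ denote the bracketed dissipation on the left of (\ref{energy I1}), so that Lemma~\ref{Lem energy I1} takes the form $y'(t)+\frac{1}{2K}D(t)\leq C$. The assumptions (\ref{R initial1})--(\ref{R initial3}) make $y(0)$ bounded uniformly in $\varepsilon\in(0,1)$.

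\textbf{Derivation of (\ref{energy I4}).} Integrating the ODI over $(0,T)$ gives
\[
y(T)+\tfrac{1}{2K}\int_0^T D(s)\,ds\leq y(0)+CT.
\]
Since $s\ln s\geq -e^{-1}$ for all $s\geq 0$, we have $y(T)\geq -|\Omega|e^{-1}$, and rearranging produces (\ref{energy I4}) with a constant depending only on $y(0)$ and $|\Omega|$.

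\textbf{Derivation of (\ref{energy I3}).} The pointwise bound would follow by converting the ODI into a Gronwall form $y'+\mu y\leq C'$ via an estimate $D\geq 2K\mu\,y-C''$ with $\mu,C''>0$ independent of $\varepsilon$ and $t$. Three coercivity ingredients are needed, one for each piece of $y$. First, since $u_\varepsilon$ vanishes on $\partial\Omega$, Poincaré gives $K\int_\Omega|u_\varepsilon|^2\leq KC_P\int_\Omega|\nabla u_\varepsilon|^2$. Second, writing $\phi:=(n_\varepsilon+\varepsilon)^{m/2}$, the mass conservation (\ref{est. u L1}) supplies a uniform bound on $\|\phi\|_{L^{2/m}}$; combined with the elementary inequality $s\ln s\leq C_\delta s^{1+\delta}$ (for $s\geq 1$ and any small $\delta>0$) and a 3D Gagliardo--Nirenberg interpolation of $\|\phi\|_{L^{2(1+\delta)/m}}$ identical in spirit to the bound on $\|n_\varepsilon+\varepsilon\|_{L^{6/5}}$ used in (\ref{energy I22}), a Young absorption yields
\[
\int_\Omega n_\varepsilon\ln n_\varepsilon \leq \eta\int_\Omega (n_\varepsilon+\varepsilon)^{m-2}|\nabla n_\varepsilon|^2+C_\eta
\]
for arbitrarily small $\eta>0$. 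Third, $|\nabla\Psi(c_\varepsilon)|^2=|\nabla c_\varepsilon|^2/g(c_\varepsilon)$, and assumptions (\ref{D4})--(\ref{D5}) combined with $f(0)=0$ and $c_\varepsilon\leq M$ imply $g(s)\geq\lambda s$ on $[0,M]$; then Cauchy--Schwarz gives
\[
\int_\Omega \frac{|\nabla c_\varepsilon|^2}{g(c_\varepsilon)}\leq\left(\int_\Omega\frac{|\nabla c_\varepsilon|^4}{c_\varepsilon^3}\right)^{1/2}\left(\int_\Omega\frac{c_\varepsilon^3}{g(c_\varepsilon)^2}\right)^{1/2},
\]
and since $c_\varepsilon^3/g(c_\varepsilon)^2\leq M\lambda^{-2}$ the second factor is uniformly bounded, so a Young absorption into $\int|\nabla c_\varepsilon|^4/c_\varepsilon^3$ is again available. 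Feeding these three absorptions into (\ref{energy I1}) produces $y'+\mu y\leq C'$, and Gronwall's lemma together with the uniform bound on $y(0)$ delivers (\ref{energy I3}).

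\textbf{Main obstacle.} I expect the most delicate point to be the GN--Young absorption in the second coercivity estimate: one must choose $\delta>0$ so that the exponent of $\|\nabla\phi\|_{L^2}$ produced by Gagliardo--Nirenberg (which scales as $6\delta/(3m-1)$) remains $\leq 2$. This is precisely the point where the subcriticality threshold enters, and the hypothesis $m\geq\frac{2}{3}$, already exploited in Lemma~\ref{Lem energy I1} via the analogous constraint $(3m-1)^{-1}\leq 1$ in (\ref{energy I22}), is inherited here without any strengthening.
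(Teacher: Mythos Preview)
Your proposal is correct and follows essentially the same route as the paper: define $y$ and the dissipation $h$, read the ODI $y'+\tfrac{1}{2K}h\leq C$ from Lemma~\ref{Lem energy I1}, and then establish the coercivity $y\leq C_5h+C_6$ via Poincar\'e for $u_\varepsilon$, Gagliardo--Nirenberg plus mass conservation for $\int n_\varepsilon\ln n_\varepsilon$, and a Young/Cauchy--Schwarz splitting of $\int|\nabla c_\varepsilon|^2/g(c_\varepsilon)$ using $g(s)\geq\lambda s$ on $[0,M]$. The only cosmetic difference is that the paper fixes the exponent in the log-bound at the critical value, writing $z\ln z\leq\tfrac{3}{3m-1}z^{m+2/3}$, which makes the Gagliardo--Nirenberg power on $\|\nabla(n_\varepsilon+\varepsilon)^{m/2}\|_{L^2}$ come out exactly $2$ and so eliminates the extra Young absorption you propose; your more flexible choice of small $\delta$ works equally well.
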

\begin{proof}
Set
\begin{equation}\label{y definition}
y_{\varepsilon}(t):=\int_{\Omega}n_{\varepsilon}\ln n_{\varepsilon}+\frac{1}{2}\int_{\Omega}|\nabla\Psi(c_{\varepsilon})|^2
     +K\int_{\Omega}|u_{\varepsilon}|^2\quad\mbox{for all}\ t\in(0, T_{\max,\varepsilon})
\end{equation}
and
\begin{equation*}
h_{\varepsilon}(t):=D_1\int_{\Omega}(n_{\varepsilon}+\varepsilon)^{m-2}|\nabla n_{\varepsilon}|^2+\int_{\Omega}\frac{|D^2c_{\varepsilon}|^2}{c_{\varepsilon}}+\int_{\Omega}\frac{|\nabla c_{\varepsilon}|^4}{c_{\varepsilon}^3}+\int_{\Omega}|\nabla u_{\varepsilon}|^2\quad\mbox{for all}\ t\in(0, T_{\max,\varepsilon}).
\end{equation*}
Then (\ref{energy I1}) implies
\begin{equation}\label{energy I5}
y'_{\varepsilon}(t)+\frac{1}{2K}h_{\varepsilon}(t)\leq C\quad\mbox{for all}\ t\in(0, T_{\max,\varepsilon}).
\end{equation}
In order to introduce dissipative term in (\ref{energy I5}), we show that $y_{\varepsilon}(t)$ is dominated by $h_{\varepsilon}(t)$. Now using the inequality
\begin{equation*}
z\ln z\leq\frac{3}{3m-1}z^{m+\frac{2}{3}}\quad\mbox{for all}\ z>0
\end{equation*}
with $m\geq\frac{2}{3}$, we can find positive constants $C_1$, $C_2$ and $C_3$ fulfilling for each $\varepsilon\in(0,1)$
\begin{eqnarray}\label{energy I51}
\int_{\Omega}n_{\varepsilon}\ln n_{\varepsilon}&\leq&\frac{3}{3m-1}\int_{\Omega}(n_{\varepsilon}+\varepsilon)^{m+\frac{2}{3}}\nonumber\\
           &=&\frac{3}{3m-1}\left\|(n_{\varepsilon}+\varepsilon)^{\frac{m}{2}}\right\|^{\frac{2(3m+2)}{3m}}_{L^{\frac{2(3m+2)}{3m}}(\Omega)}\nonumber\\
           &\leq&\frac{3C_1}{3m-1}\left(\left\|\nabla (n_{\varepsilon}+\varepsilon)^{\frac{m}{2}}\right\|^{\frac{3m}{3m+2}}_{L^{2}(\Omega)}\cdot
                     \left\|(n_{\varepsilon}+\varepsilon)^{\frac{m}{2}}\right\|^{\frac{2}{3m+2}}_{L^{\frac{2}{m}}(\Omega)}
                     +\left\|(n_{\varepsilon}+\varepsilon)^{\frac{m}{2}}\right\|_{L^{\frac{2}{m}}(\Omega)}\right)^{\frac{2(3m+2)}{3m}}\nonumber\\
               &\leq&C_2\left(\left\|\nabla (n_{\varepsilon}+\varepsilon)^{\frac{m}{2}}\right\|^{2}_{L^{2}(\Omega)}\cdot
                     \left\|(n_{\varepsilon}+\varepsilon)^{\frac{m}{2}}\right\|^{\frac{4}{3m}}_{L^{\frac{2}{m}}(\Omega)}
                     +\left\|(n_{\varepsilon}+\varepsilon)^{\frac{m}{2}}\right\|^{\frac{2(3m+2)}{3m}}_{L^{\frac{2}{m}}(\Omega)}\right)\nonumber\\
               &\leq&C_3\left(\int_{\Omega}(n_{\varepsilon}+\varepsilon)^{m-2}|\nabla n_{\varepsilon}|^2+1\right)
                     \quad\mbox{for all}\ t\in(0, T_{\max,\varepsilon})
\end{eqnarray}
by the Gagliardo-Nirenberg inequality and (\ref{n0 L1}). According to (\ref{D3}), we have
\begin{equation*}
g(s):=\frac{f(s)}{\chi(s)}\in C^1([0,M])\quad\mbox{and}\quad g(0)=0.
\end{equation*}
Hence the mean value theorem yields $g(s)\geq \min_{\tau\in[0,M]}g'(\tau)s=:\bar{M}s$. We now apply Young's inequality and (\ref{est. c L infite}) to obtain
\begin{eqnarray}\label{energy I52}
\frac{1}{2}\int_{\Omega}|\nabla\Psi(c_{\varepsilon})|^2&=&\frac{1}{2}\int_{\Omega}\frac{|\nabla
                     c_{\varepsilon}|^2}{g(c_{\varepsilon})}\nonumber\\
             &\leq&\frac{1}{4}\int_{\Omega}\frac{|\nabla
                     c_{\varepsilon}|^4}{c_{\varepsilon}^3}+\frac{1}{4}\int_{\Omega}\frac{c_{\varepsilon}^3}{g^2(c_{\varepsilon})}\nonumber\\
             &\leq&\frac{1}{4}\int_{\Omega}\frac{|\nabla
                     c_{\varepsilon}|^4}{c_{\varepsilon}^3}+\frac{M|\Omega|}{4\bar{M}^{2}}
                     \quad\mbox{for all}\ t\in(0, T_{\max,\varepsilon}).
\end{eqnarray}
From the Poincar\'e inequality, we have $C_4>0$ such that
\begin{equation}\label{energy I53}
K\int_{\Omega}|u_{\varepsilon}|^2\leq C_4\int_{\Omega}|\nabla u_{\varepsilon}|^2\quad\mbox{for all}\ t\in(0, T_{\max,\varepsilon}).
\end{equation}
It follows easily from (\ref{energy I51})-(\ref{energy I53}) that
\begin{equation*}
y_{\varepsilon}(t)\leq C_5h_{\varepsilon}(t)+C_6\quad\mbox{for all}\ t\in(0, T_{\max,\varepsilon})
\end{equation*}
with $C_5:=\max\left\{\frac{D_1}{C_3}, \frac{1}{4}, C_4\right\}$ and $C_6:=C_3+\frac{M|\Omega|}{4\bar{M}^{2}}$. This, along with (\ref{energy I5}), yields
\begin{equation*}
y'_{\varepsilon}(t)+\frac{1}{4KC_5}y_{\varepsilon}(t)+\frac{1}{4K}h_{\varepsilon}(t)\leq C_7:=C+\frac{C_6}{4KC_5}\quad\mbox{for all}\ t\in(0, T_{\max,\varepsilon}).
\end{equation*}
Noting that $h_{\varepsilon}(t)\geq0$, a standard ODE comparison argument implies
\begin{equation}\label{energy I54}
y_{\varepsilon}(t)\leq \max\left\{\sup_{\varepsilon\in(0, 1)}y_{\varepsilon}(0),\quad 4KC_5C_7\right\}\quad\mbox{for all}\ t\in(0, T_{\max,\varepsilon}).
\end{equation}
In view of (\ref{R initial1})-(\ref{R initial3}) and \cite[Lemma 3.7]{winkler2014global}, we obtain (\ref{energy I3}). On the other hand, since $z\ln z\geq-\frac{1}{e}$ for all $z>0$, we have $y_{\varepsilon}(t)\geq-\frac{|\Omega|}{e}$. Therefore, a time integration of (\ref{energy I54}) directly leads to (\ref{energy I4}).
\end{proof}

\section{Global existence for the regularized problem (\ref{RegularizedCNS})}
With Lemma \ref{Lem energy I2} at hand, we are now in the position to show the solution of approximate problem (\ref{RegularizedCNS}) is actually global in time.
\begin{lem}\label{Lem global exi.}
For each $\varepsilon\in(0,1)$, the solutions of (\ref{RegularizedCNS}) are global in time.
\end{lem}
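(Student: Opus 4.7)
The plan is to argue by contradiction via the extensibility criterion of Lemma \ref{Lem Local sol.}: assuming $T_{\max,\varepsilon} < \infty$, I will show that each of $\|n_\varepsilon(\cdot, t)\|_{L^\infty(\Omega)}$, $\|c_\varepsilon(\cdot, t)\|_{W^{1,q}(\Omega)}$ and $\|u_\varepsilon(\cdot, t)\|_{D(A^\alpha)}$ remains bounded on $(0, T_{\max,\varepsilon})$. Since $\varepsilon$ is held fixed, all constants may depend on $\varepsilon$, and one freely exploits the regularizing features built into (\ref{RegularizedCNS}): $F'_\varepsilon \leq 1$ and $sF'_\varepsilon(s) \leq 1/\varepsilon$ from (\ref{RF1})--(\ref{RF2}), together with the smoothing of the Yosida approximation $Y_\varepsilon$.

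The first step is to upgrade the $L^1$ bound from Lemma \ref{Lem L1 of u c} to $L^p$ control on $n_\varepsilon$ for every finite $p$. Testing the first equation of (\ref{RegularizedCNS}) against $n_\varepsilon^{p-1}$, the convective term drops out because $\nabla \cdot u_\varepsilon = 0$, the diffusion contributes a coercive term, and the chemotactic integral is controlled by Young's inequality using $F'_\varepsilon \leq 1$, the uniform bound $\|c_\varepsilon\|_{L^\infty} \leq M$ from (\ref{est. c L infite}), and the space-time integrability $\nabla c_\varepsilon \in L^4_{loc}$ provided by (\ref{energy I4}); a Gronwall argument then yields $n_\varepsilon \in L^\infty_{loc}([0, T_{\max,\varepsilon}); L^p(\Omega))$ for every $p \geq 1$. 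A subsequent Alikakos--Moser iteration, applied to the equation viewed as semilinear with drift $F'_\varepsilon(n_\varepsilon)\chi(c_\varepsilon)\nabla c_\varepsilon + u_\varepsilon$ lying in a sufficiently high $L^q$ space, upgrades this to an $L^\infty$-bound on $n_\varepsilon$.

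With $n_\varepsilon$ under $L^\infty$-control, the source $-F_\varepsilon(n_\varepsilon)f(c_\varepsilon) - u_\varepsilon \cdot \nabla c_\varepsilon$ in the $c_\varepsilon$-equation lies in a good $L^q$-space, so parabolic $L^p$-maximal regularity (after one bootstrap) delivers $c_\varepsilon \in L^\infty_{loc}([0, T_{\max,\varepsilon}); W^{1,q}(\Omega))$ for any $q > 3$. Finally, recasting the $u_\varepsilon$-equation as the abstract Stokes problem
$$u_{\varepsilon t} + Au_\varepsilon = \mathcal{P}\left[n_\varepsilon \nabla \Phi - \kappa (Y_\varepsilon u_\varepsilon \cdot \nabla) u_\varepsilon\right]$$
and applying standard fractional-power estimates for the Stokes semigroup $e^{-tA}$ together with the $L^\infty$ control on $n_\varepsilon$ and the $\varepsilon$-dependent smoothing of $Y_\varepsilon$ (note that $\|Y_\varepsilon v\|_{D(A)} \leq \varepsilon^{-1}\|v\|_{L^2_\sigma}$), one obtains $u_\varepsilon \in L^\infty_{loc}([0, T_{\max,\varepsilon}); D(A^\alpha))$ for every $\alpha > 3/4$. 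These three bounds together contradict the blow-up alternative in Lemma \ref{Lem Local sol.}, forcing $T_{\max,\varepsilon} = \infty$.

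The main obstacle is the quasilinear nature of the $n_\varepsilon$-diffusion $\nabla \cdot (D_\varepsilon(n_\varepsilon)\nabla n_\varepsilon)$: for $m < 1$ it degenerates at large densities and obstructs a direct Moser iteration. This is handled by passing to the variable $(n_\varepsilon + \varepsilon)^{(m+1)/2}$ in the diffusion term to restore a clean $|\nabla \cdot|^2$-coercivity, at the cost of tracking powers through the $L^p$ testing; this is the technical device already exploited in \cite{Tao&Winkler-AIHP-2013} and \cite{winkler2014global}, and its adaptation to the present $\varepsilon$-regularized system is essentially routine.
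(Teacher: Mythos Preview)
Your overall architecture is right --- contradiction via the blow-up alternative, starting from the energy bounds of Lemma~\ref{Lem energy I2} --- and it matches the paper. But the ordering you propose does not close. You claim to reach $n_\varepsilon\in L^\infty_t L^p_x$ for \emph{every} $p$ (and then $L^\infty$ via Moser) using only $F'_\varepsilon\le 1$ together with $\nabla c_\varepsilon\in L^4_{t,x}$ from (\ref{energy I4}). Testing the first equation by $(n_\varepsilon+\varepsilon)^{p-1}$ and using $F'_\varepsilon\le 1$ leaves, after Young's inequality, a term of the form $\int_\Omega (n_\varepsilon+\varepsilon)^{p-m+1}|\nabla c_\varepsilon|^2$; with $|\nabla c_\varepsilon|$ only in $L^4$ this produces $\int_\Omega (n_\varepsilon+\varepsilon)^{2(p-m+1)}$, and the inequality $2(p-m+1)\le p$ forces $p\le 2m-2$, which is vacuous for $m$ near $\tfrac{2}{3}$. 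Even replacing $F'_\varepsilon\le 1$ by the sharper $sF'_\varepsilon(s)\le\tfrac{1}{\varepsilon}$ (which lowers the bad power by one) and bootstrapping via Gagliardo--Nirenberg only gets you up to a finite exponent (of order $9m+3$), never all $p$; and a Moser iteration with the ``drift'' $\chi(c_\varepsilon)\nabla c_\varepsilon$ merely in $L^4_{t,x}$ is subcritical in $3{+}1$ dimensions (one needs an exponent $>5$). So your step ``first finish $n_\varepsilon$, then $c_\varepsilon$, then $u_\varepsilon$'' stalls at the very first stage.

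The paper resolves this by \emph{interleaving} the three equations rather than treating them sequentially. Using $sF'_\varepsilon(s)\le\tfrac{1}{\varepsilon}$, it first obtains the moderate bound $n_\varepsilon\in L^\infty_t L^p_x$ for $p\le 2(m+1)$ (which already exceeds $3$ since $m\ge\tfrac{2}{3}$). This, together with the Yosida smoothing $\|Y_\varepsilon u_\varepsilon\|_{L^\infty}\le C_\varepsilon\|u_\varepsilon\|_{L^2}$, is enough to test the Stokes equation against $Au_\varepsilon$, bound $\|\nabla u_\varepsilon\|_{L^2}$ uniformly in time, and then push through the variation-of-constants formula to obtain $u_\varepsilon\in L^\infty(\Omega\times(\tau,T_{\max,\varepsilon}))$. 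With $u_\varepsilon$ bounded, the heat-semigroup representation of $c_\varepsilon$ yields $\nabla c_\varepsilon\in L^\infty_t L^r_x$ for some $r>3$, and a second pass gives $c_\varepsilon\in L^\infty_t W^{1,\infty}_x$. Only \emph{then}, with $\nabla c_\varepsilon$ bounded, does the paper return to the $n_\varepsilon$-equation to obtain all $L^p$ and run the Alikakos--Moser iteration. The circular route $n_{\text{moderate}}\to u\to c\to n_{\text{full}}$ is the essential mechanism; your linear route $n\to c\to u$ misses it.
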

\begin{proof}
In this section, we shall denote by $C$ various positive constants which may vary from step to step and which possibly depend on $\varepsilon$. Assume for contradiction that $T_{\max,\varepsilon}<\infty$ for some $\varepsilon\in(0,1)$. By Lemma \ref{Lem energy I2}, we know that
\begin{eqnarray}\label{global exi.1}
\int_{\Omega}|u_{\varepsilon}|^2\leq C\quad\mbox{for all}\ t\in(0, T_{\max,\varepsilon})
\end{eqnarray}
and
\begin{eqnarray}\label{global exi.2}
\int_0^{T_{\max,\varepsilon}}\int_{\Omega}|\nabla c_{\varepsilon}|^4&=&\int_0^{T_{\max,\varepsilon}}\int_{\Omega}\frac{|\nabla c_{\varepsilon} |^4}{c^3_{\varepsilon}}c^3_{\varepsilon}\nonumber\\
&\leq&M^3\int_0^{T_{\max,\varepsilon}}\int_{\Omega}\frac{|\nabla c_{\varepsilon}|^4}{c^3_{\varepsilon}}\nonumber\\
&\leq& C\quad\mbox{for all}\ t\in(0, T_{\max,\varepsilon}).
\end{eqnarray}
Multiplying the first equation in (\ref{RegularizedCNS}) by $p(n_{\varepsilon}+\varepsilon)^{p-1}$ with $p\in[m+1, 2(m+1)]$ and using integration by parts we obtain
\begin{eqnarray}\label{global exi.3}
&&\frac{d}{dt}\int_{\Omega}(n_{\varepsilon}+\varepsilon)^p+p(p-1)\int_{\Omega}(n_{\varepsilon}+\varepsilon)^{p-2}D_{\varepsilon}(n_{\varepsilon})|\nabla n_{\varepsilon}|^2\nonumber\\
&&\quad=p(p-1)\int_{\Omega}(n_{\varepsilon}+\varepsilon)^{p-2}n_{\varepsilon}F'_{\varepsilon}(n_{\varepsilon})\chi(c_{\varepsilon})\nabla c_{\varepsilon}\cdot \nabla n_{\varepsilon}
\end{eqnarray}
for all $t\in(0, T_{\max,\varepsilon})$. We deduce from (\ref{D2}), (\ref{RF2}) and Young's inequality that
\begin{eqnarray*}
&&\frac{d}{dt}\int_{\Omega}(n_{\varepsilon}+\varepsilon)^p+p(p-1)D_1\int_{\Omega}(n_{\varepsilon}+\varepsilon)^{m+p-3}|\nabla n_{\varepsilon}|^2\\
&&\quad\leq\frac{p(p-1)}{\varepsilon}\max_{s\in[0,M]}\chi(s)\int_{\Omega}(n_{\varepsilon}+\varepsilon)^{p-2}\nabla c_{\varepsilon}\cdot
        \nabla n_{\varepsilon}\\
&&\quad\leq p(p-1)D_1\int_{\Omega}(n_{\varepsilon}+\varepsilon)^{m+p-3}|\nabla n_{\varepsilon}|^2+\int_{\Omega}(n_{\varepsilon}+\varepsilon)^{2(p-m-1)}
        +C\int_{\Omega}|\nabla c_{\varepsilon}|^{4}
\end{eqnarray*}
for all $t\in(0, T_{\max,\varepsilon})$. Since $2(p-m-1)\leq p$ for $p\leq2(m+1)$, applying Young's inequality again, we obtain
\begin{eqnarray*}
\frac{d}{dt}\int_{\Omega}(n_{\varepsilon}+\varepsilon)^p\leq\int_{\Omega}(n_{\varepsilon}+\varepsilon)^p+C\int_{\Omega}|\nabla c_{\varepsilon}|^{4}+C
\quad\mbox{for all}\ t\in(0, T_{\max,\varepsilon}).
\end{eqnarray*}
Integrating this yields
\begin{equation}\label{global exi.p}
\int_{\Omega}(n_{\varepsilon}+\varepsilon)^p\leq C\quad\mbox{for all}\ t\in(0, T_{\max,\varepsilon}),
\end{equation}
where $p\in[1, 2(m+1)]$. We now use the idea from \cite{winkler2014global} to obtain the boundedness of $u_{\varepsilon}$. From (\ref{global exi.1}), we get
\begin{eqnarray}\label{global exi.4}
\|Y_{\varepsilon}u_{\varepsilon}(t)\|_{L^{\infty}(\Omega)}&=&\|(1+\varepsilon A)^{-1}u_{\varepsilon}(t)\|_{L^{\infty}(\Omega)}\nonumber\\
            &\leq&C\|u_{\varepsilon}(t)\|_{L^{2}(\Omega)}\nonumber\\
            &\leq&C\quad\mbox{for all}\ t\in(0, T_{\max,\varepsilon})
\end{eqnarray}
due to the embedding $D(1+\varepsilon A)\hookrightarrow L^{\infty}(\Omega)$. We apply the Helmholtz projection $\mathcal {P}$ to the third equation in (\ref{RegularizedCNS}), test the resulting identity by $Au_{\varepsilon}$ and integrate by parts over $\Omega$ to have
\begin{equation*}
\frac{1}{2}\frac{d}{dt}\int_{\Omega}|A^{\frac{1}{2}}u_{\varepsilon}|^2+\int_{\Omega}|Au_{\varepsilon}|^2=\int_{\Omega}\mathcal {P}H_{\varepsilon}\cdot Au_{\varepsilon}\quad\mbox{for all}\ t\in(0, T_{\max,\varepsilon})
\end{equation*}
with $H_{\varepsilon}(x,t):=n_{\varepsilon}\nabla\Phi-\kappa(Y_{\varepsilon}u_{\varepsilon}\cdot\nabla)u_{\varepsilon}$, where we have used
\begin{equation}\label{global exi.5}
\int_{\Omega}\phi\cdot A\phi=\int_{\Omega}|A^{\frac{1}{2}}\phi|^2=\int_{\Omega}|\nabla\phi|^2\quad\mbox{for all}\ \phi\in D(A).
\end{equation}
Applying Young's inequality, $\|\mathcal {P}\phi\|_{L^{2}(\Omega)}\leq\|\phi\|_{L^{2}(\Omega)}$ for all $\phi\in L^{2}(\Omega)$ (\cite[Lemma 2.5.2]{Sohr-2001}), (\ref{global exi.4}) and (\ref{D3}), we can estimate
\begin{eqnarray*}
\int_{\Omega}\mathcal {P}H_{\varepsilon}\cdot Au_{\varepsilon}
         &\leq&\int_{\Omega}|Au_{\varepsilon}|^2+\frac{1}{4}\int_{\Omega}|\mathcal {P}H_{\varepsilon}|^2\\
         &\leq&\int_{\Omega}|Au_{\varepsilon}|^2+\frac{|\kappa|}{2}\int_{\Omega}|(Y_{\varepsilon}u_{\varepsilon}\cdot\nabla)u_{\varepsilon}|^2
               +\frac{1}{2}\int_{\Omega}|n_{\varepsilon}\nabla\Phi|^2\\
         &\leq&\int_{\Omega}|Au_{\varepsilon}|^2+C\left(\int_{\Omega}|\nabla u_{\varepsilon}|^2+\int_{\Omega}n_{\varepsilon}^2\right)
               \quad\mbox{for all}\ t\in(0, T_{\max,\varepsilon}).
\end{eqnarray*}
Hence we get
\begin{equation*}
\frac{1}{2}\frac{d}{dt}\int_{\Omega}|A^{\frac{1}{2}}u_{\varepsilon}|^2\leq C\left(\int_{\Omega}|\nabla u_{\varepsilon}|^2+\int_{\Omega}n_{\varepsilon}^2\right)\quad\mbox{for all}\ t\in(0, T_{\max,\varepsilon}).
\end{equation*}
This, along with (\ref{global exi.p}) and (\ref{global exi.5}), gives
\begin{equation*}
\int_{\Omega}|\nabla u_{\varepsilon}|^2\leq C\quad\mbox{for all}\ t\in(0, T_{\max,\varepsilon}).
\end{equation*}
We thereby obtain
\begin{equation}\label{global exi.6}
\|\mathcal {P}H_{\varepsilon}(t)\|_{L^{2}(\Omega)}\leq C\quad\mbox{for all}\ t\in(0, T_{\max,\varepsilon}).
\end{equation}
Applying the fractional power $A^{\alpha}$ with $\alpha\in(\frac{3}{4}, 1)$ to both sides of the variation-of-constants formula
\begin{equation*}
u_{\varepsilon}(t)=e^{-tA}u_{0\varepsilon}+\int_0^te^{-(t-s)A}\mathcal {P}H_{\varepsilon}(s)ds\quad\mbox{for all}\ t\in(0, T_{\max,\varepsilon}),
\end{equation*}
using the well-known smoothing estimate of the Stokes semigroup (\cite{Giga-JDE-1986}) and (\ref{global exi.6}), we have
\begin{eqnarray*}
\|A^{\alpha}u_{\varepsilon}(t)\|_{L^{2}(\Omega)}&\leq&\|A^{\alpha}e^{-tA}u_{0\varepsilon}\|_{L^{2}(\Omega)}+\int_0^t\|A^{\alpha}e^{-(t-s)A}\mathcal {P}H_{\varepsilon}(s)\|_{L^{2}(\Omega)}ds\\
   &\leq&Ct^{-\alpha}\|u_{0\varepsilon}\|_{L^{2}(\Omega)}+C\int_0^t(t-s)^{-\alpha}\|\mathcal {P}H_{\varepsilon}(s)\|_{L^{2}(\Omega)}ds\\
   &\leq&C   \quad\mbox{for all}\ t\in(\tau, T_{\max,\varepsilon})
\end{eqnarray*}
with any $\tau\in(0, T_{\max,\varepsilon})$. In view of the embedding $D(A^{\alpha})\hookrightarrow L^{\infty}(\Omega)$ asserted by our chioce of $\alpha$ (\cite[Lemma 2.4.3]{Sohr-2001}), we deduce
\begin{equation}\label{global bound for u}
\|u_{\varepsilon}(t)\|_{L^{\infty}(\Omega)}\leq C\|A^{\alpha}u_{\varepsilon}(t)\|_{L^{2}(\Omega)}\leq C\quad\mbox{for all}\ t\in(\tau, T_{\max,\varepsilon}).
\end{equation}
Let $r:=\min\{2(m+1), 4\}$, then $r>3$ due to $m\geq\frac{2}{3}$.
Employing $\nabla$ to both sides of the variation-of-constants formula for $c_{\varepsilon}$
\begin{equation*}
c_{\varepsilon}(t)=e^{(t-\frac{\tau}{2})\Delta}c_{\varepsilon}(\frac{\tau}{2})-\int_{\frac{\tau}{2}}^te^{(t-s)\Delta}\left(F_{\varepsilon}(n_{\varepsilon})f(c_{\varepsilon})+u_{\varepsilon}\cdot\nabla c_{\varepsilon}\right)(s)ds\quad\mbox{for all}\ t\in(\frac{\tau}{2}, T_{\max,\varepsilon}),
\end{equation*}
recalling the standard smoothing estimates of Neumann heat semigroup (\cite[Lemma 1.3]{Winkler-JDE-2010}, see also \cite{QS}), (\ref{RF3}), (\ref{est. c L infite}), (\ref{global bound for u}), (\ref{global exi.p}), (\ref{global exi.2}) and the H\"older inequality we obtain
\begin{eqnarray}\label{global exi.7}
\left\|\nabla c_{\varepsilon}(t)\right\|_{L^r(\Omega)}&\leq&\left\|\nabla e^{(t-\frac{\tau}{2})\Delta}c_{\varepsilon}(\frac{\tau}{2})\right\|_{L^r(\Omega)}
            +\int_{\frac{\tau}{2}}^t\left\|\nabla e^{(t-s)\Delta}\left(F_{\varepsilon}(n_{\varepsilon})f(c_{\varepsilon})+u_{\varepsilon}\cdot\nabla c_{\varepsilon}\right)(s)\right\|_{L^r(\Omega)}ds\nonumber\\
   &\leq&C\left(t-\frac{\tau}{2}\right)^{-\frac{1}{2}}\left\|c_{\varepsilon}(\frac{\tau}{2})\right\|_{L^r(\Omega)}
            +C\int_{\frac{\tau}{2}}^t(t-s)^{-\frac{1}{2}}\left\|(F_{\varepsilon}(n_{\varepsilon})f(c_{\varepsilon}))(s)\right\|_{L^r(\Omega)}ds
              \nonumber\\
            &&\quad+C\int_{\frac{\tau}{2}}^t(t-s)^{-\frac{1}{2}}\left\|(u_{\varepsilon}\cdot\nabla c_{\varepsilon})(s)\right\|_{L^r(\Omega)}ds\nonumber\\
   &\leq&C\tau^{-\frac{1}{2}}+C\int_{\frac{\tau}{2}}^t(t-s)^{-\frac{1}{2}}\left\|n_{\varepsilon}(s)\right\|_{L^r(\Omega)}ds
              +C\int_{\frac{\tau}{2}}^t(t-s)^{-\frac{1}{2}}\left\|\nabla c_{\varepsilon}(s)\right\|_{L^r(\Omega)}ds\nonumber\\
   &\leq&C\tau^{-\frac{1}{2}}+C\int_{0}^t(t-s)^{-\frac{1}{2}}ds+C\int_{0}^t(t-s)^{-\frac{1}{2}}\left\|\nabla
            c_{\varepsilon}(s)\right\|_{L^4(\Omega)}ds\nonumber\\
   &\leq&C\tau^{-\frac{1}{2}}+CT_{\max,\varepsilon}^{\frac{1}{2}}+C\left(\int_0^{T_{\max,\varepsilon}}
            (t-s)^{-\frac{2}{3}}ds\right)^{\frac{3}{4}}\left(\int_0^{T_{\max,\varepsilon}}
            \int_{\Omega}|\nabla c_{\varepsilon}(s)|^4\right)^{\frac{1}{4}}\nonumber\\
   &\leq&C\tau^{-\frac{1}{2}}+CT_{\max,\varepsilon}^{\frac{1}{2}}+CT_{\max,\varepsilon}^{\frac{1}{4}}\nonumber\\
   &\leq&C\quad\mbox{for all}\ t\in(\tau, T_{\max,\varepsilon}).
\end{eqnarray}
We next rewrite the variation-of-constants formula for $c_{\varepsilon}$ in the form
\begin{equation*}
c_{\varepsilon}(t)=e^{t(\Delta-1)}c_{0,\varepsilon}+\int_{0}^te^{(t-s)(\Delta-1)}\left(c_{\varepsilon}-
F_{\varepsilon}(n_{\varepsilon})f(c_{\varepsilon})-u_{\varepsilon}\cdot\nabla c_{\varepsilon}\right)(s)ds\quad\mbox{for all}\ t\in(0, T_{\max,\varepsilon}),
\end{equation*}
Picking $\theta\in(\frac{1}{2}+\frac{3}{2r}, 1)$, then the domain of the fractional power $D((-\Delta+1)^{\theta})\hookrightarrow W^{1,\infty}(\Omega)$ (\cite{Winkler-MN-2010,Zhang&Li-ZAMP-2013}). Hence, we obtain by virtue of $L^p$-$L^q$ estimates associated heat semigroup (\cite{Winkler-MN-2010}), (\ref{est. c L infite}), (\ref{RF3}), (\ref{D3}), (\ref{global exi.p}), (\ref{global bound for u}) and (\ref{global exi.7})
\begin{eqnarray}\label{global exi.8}
\|c_{\varepsilon}(t)\|_{W^{1,\infty}(\Omega)}&\leq&C\left\|(-\Delta+1)^{\theta}c_{\varepsilon}(t)\right\|_{L^r(\Omega)}\nonumber\\
             &\leq&Ct^{-\theta}e^{-\nu t}\left\|c_{0,\varepsilon}\right\|_{L^r(\Omega)}\nonumber\\
             &&\quad+C\int_0^t(t-s)^{-\theta}
                   e^{-\nu (t-s)}\left\|\left(c_{\varepsilon}-F_{\varepsilon}(n_{\varepsilon})f(c_{\varepsilon})-u_{\varepsilon}\cdot\nabla c_{\varepsilon}\right)(s)\right\|_{L^r(\Omega)}ds\nonumber\\
             &\leq&C\tau^{-\theta}+C\int_0^t(t-s)^{-\theta}e^{-\nu(t-s)}ds+C\int_0^t(t-s)^{-\theta}
                   e^{-\nu (t-s)}\left\|n_{\varepsilon}(s)\right\|_{L^r(\Omega)}ds\nonumber\\
             &&\quad+C\int_0^t(t-s)^{-\theta}
                   e^{-\nu (t-s)}\left\|\nabla c_{\varepsilon}(s)\right\|_{L^r(\Omega)}ds\nonumber\\
             &\leq&C\tau^{-\theta}+C\Gamma(1-\theta)\nonumber\\
             &\leq&C\quad\mbox{for all}\ t\in(\tau, T_{\max,\varepsilon})
\end{eqnarray}
with some $\nu>0$, where $\Gamma(\cdot)$ is the Gamma function. We may then apply (\ref{global exi.3}) once more and Young's inequality to deduce that
\begin{eqnarray*}
&&\frac{d}{dt}\int_{\Omega}n_{\varepsilon}^p+p(p-1)D_1\int_{\Omega}n_{\varepsilon}^{m+p-3}|\nabla n_{\varepsilon}|^2\\
&&\leq C\int_{\Omega}n_{\varepsilon}^{p-2}\nabla c_{\varepsilon}\cdot\nabla n_{\varepsilon}\\
&&\leq C\int_{\Omega}n_{\varepsilon}^{p-2}|\nabla n_{\varepsilon}|\\
&&\leq p(p-1)D_1\int_{\Omega}n_{\varepsilon}^{m+p-3}|\nabla n_{\varepsilon}|^2+\int_{\Omega}n_{\varepsilon}^{p-m-1}+C\\
&&\leq p(p-1)D_1\int_{\Omega}n_{\varepsilon}^{m+p-3}|\nabla n_{\varepsilon}|^2+\int_{\Omega}n_{\varepsilon}^{p}+C
       \quad\mbox{for all}\ t\in(\tau, T_{\max,\varepsilon}).
\end{eqnarray*}
Therefore, integrating with respect to $t$, we obtain
\begin{equation*}
\int_{\Omega}n_{\varepsilon}^p\leq C\quad\mbox{for all}\ t\in(\tau, T_{\max,\varepsilon})
\end{equation*}
with any $p\geq1$. Upon an application of the well-known Moser-Alikakos iteration procedure (\cite{Alikakos-CPDE-1979,Tao&Winkler-JDE-2012}), we see that
\begin{equation}\label{global bound for n}
\|n_{\varepsilon}(t)\|_{L^{\infty}(\Omega)}\leq C\quad\mbox{for all}\ t\in(\tau, T_{\max,\varepsilon}).
\end{equation}
In view of (\ref{global bound for u}) (\ref{global bound for n}), we apply Lemma \ref{Lem Local sol.} to reach a contradiction.
\end{proof}

\section{Further $\varepsilon$-independent estimates for (\ref{RegularizedCNS})}
In order to pass to limits in (\ref{RegularizedCNS}) with safety, we need some more $\varepsilon$-independent estimates for the solution.
\begin{lem}\label{Lem spati-temporal est.}
Suppose that (\ref{D1})-(\ref{D6}) hold. There exists $C>0$ such that for all $\varepsilon\in(0,1)$, the solutions of (\ref{RegularizedCNS}) satisfy
\begin{equation}\label{st est11}
\int_0^T\int_{\Omega}\left|\nabla n_{\varepsilon}^{\frac{m}{2}}\right|^2\leq C(T+1),\quad\mbox{for all}\ T>0,
\end{equation}
\begin{equation}\label{st est1}
\int_0^T\int_{\Omega}(n_{\varepsilon}+\varepsilon)^{p}\leq C(T+1),\quad 1\leq p\leq\frac{3m+2}{3}\quad\mbox{for all}\ T>0,
\end{equation}
\begin{equation}\label{st est2}
\int_0^T\int_{\Omega}\frac{D_{\varepsilon}(n_{\varepsilon})}{n_{\varepsilon}}|\nabla n_{\varepsilon}|^2\leq C(T+1)\quad\mbox{for all}\ T>0,
\end{equation}
\begin{equation}\label{st est21}
\int_0^T\int_{\Omega}\left(D_{\varepsilon}(n_{\varepsilon})\nabla n_{\varepsilon}\right)^{\frac{3m+2}{3m+1}}\leq C(T+1)\quad\mbox{for all}\ T>0
\end{equation}
and
\begin{equation}\label{st est3}
\int_0^T\int_{\Omega}|u_{\varepsilon}|^{\frac{10}{3}}\leq C(T+1)\quad\mbox{for all}\ T>0.
\end{equation}
Moreover, if $\frac{2}{3}\leq m\leq2$, then we have
\begin{equation}\label{st est4}
\int_0^T\int_{\Omega}\left|\nabla n_{\varepsilon}\right|^{\frac{3m+2}{4}}\leq C(T+1),\quad\mbox{for all}\ T>0.
\end{equation}
\end{lem}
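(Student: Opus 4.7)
The plan is to derive all six bounds from the energy estimates \eqref{energy I3}--\eqref{energy I4} of Lemma~\ref{Lem energy I2}, the mass conservation \eqref{est. u L1}, a three--dimensional Gagliardo--Nirenberg interpolation, and a pair of Hölder computations. No new PDE ingredient is needed; the work is in exponent bookkeeping.

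The bounds \eqref{st est11} and \eqref{st est2} come essentially for free. From the identity $|\nabla(n_\varepsilon+\varepsilon)^{m/2}|^2=\frac{m^2}{4}(n_\varepsilon+\varepsilon)^{m-2}|\nabla n_\varepsilon|^2$ together with the first summand of \eqref{energy I4}, \eqref{st est11} is immediate. For \eqref{st est2}, I retrace the passage from Lemma~\ref{Lem energy I} to Lemma~\ref{Lem energy I1}: the pointwise inequality $D_\varepsilon(n_\varepsilon)/n_\varepsilon\ge D_1(n_\varepsilon+\varepsilon)^{m-2}$ used in \eqref{energy I21} only weakens the dissipation, so keeping the original full dissipation on the left of \eqref{energy I}, integrating in time, using $\int n_\varepsilon\ln n_\varepsilon\ge-|\Omega|/e$ to absorb the boundary term at $t=T$, and bounding the right--hand side by the $\int_0^T\int|\nabla u_\varepsilon|^2$ control contained in \eqref{energy I4} produces \eqref{st est2}.

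For the space--time integrability \eqref{st est1}, I apply Gagliardo--Nirenberg to $v_\varepsilon:=(n_\varepsilon+\varepsilon)^{m/2}$ in three dimensions in the form
\begin{equation*}
\|v_\varepsilon\|_{L^{2(3m+2)/(3m)}(\Omega)}^{2(3m+2)/(3m)}\le C\bigl(\|\nabla v_\varepsilon\|_{L^2(\Omega)}^{2}\|v_\varepsilon\|_{L^{2/m}(\Omega)}^{4/(3m)}+\|v_\varepsilon\|_{L^{2/m}(\Omega)}^{2(3m+2)/(3m)}\bigr),
\end{equation*}
where the target exponent is chosen so that the interpolation parameter $\theta=3m/(3m+2)$ yields $q\theta=2$. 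Since $\|v_\varepsilon\|_{L^{2/m}}$ is uniformly controlled by \eqref{est. u L1} and $\int_0^T\|\nabla v_\varepsilon\|_{L^2}^2\le C(T+1)$ by \eqref{energy I4}, a time integration produces the endpoint $p=(3m+2)/3$, and interpolation with the $L^1$ mass bound covers the full range $1\le p\le(3m+2)/3$. The bound \eqref{st est3} on $u_\varepsilon$ is the analogous Gagliardo--Nirenberg inequality $\|u_\varepsilon\|_{L^{10/3}}^{10/3}\le C\|\nabla u_\varepsilon\|_{L^2}^{2}\|u_\varepsilon\|_{L^2}^{4/3}$ in three dimensions, combined with the $L^\infty_tL^2_x$ control in \eqref{energy I3} and the space--time gradient control in \eqref{energy I4}.

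Finally, \eqref{st est21} and \eqref{st est4} are twin Hölder estimates. Starting from $D_\varepsilon(n_\varepsilon)\le D_2(n_\varepsilon+\varepsilon)^{m-1}$, I split
\begin{equation*}
\bigl(D_\varepsilon(n_\varepsilon)|\nabla n_\varepsilon|\bigr)^{\frac{3m+2}{3m+1}}\le D_2^{\frac{3m+2}{3m+1}}\bigl[(n_\varepsilon+\varepsilon)^{m-2}|\nabla n_\varepsilon|^2\bigr]^{\frac{3m+2}{2(3m+1)}}(n_\varepsilon+\varepsilon)^{\frac{m(3m+2)}{2(3m+1)}},
\end{equation*}
and then apply Hölder in $x$ with conjugate exponents $2(3m+1)/(3m+2)$ and $2(3m+1)/(3m)$ and, after that, Hölder in $t$ with the same pair; the second factor collapses exactly to $(n_\varepsilon+\varepsilon)^{(3m+2)/3}$, which is integrable by \eqref{st est1}, while the first is bounded by \eqref{energy I4}, giving \eqref{st est21}. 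For \eqref{st est4} I write $|\nabla n_\varepsilon|^{(3m+2)/4}=[(n_\varepsilon+\varepsilon)^{m-2}|\nabla n_\varepsilon|^2]^{(3m+2)/8}(n_\varepsilon+\varepsilon)^{(3m+2)(2-m)/8}$ and repeat the argument with conjugate exponents $8/(3m+2)$ and $8/(6-3m)$; the hypothesis $m\le2$ is exactly what keeps the second exponent positive and finite, and once again the second factor reduces to the scope of \eqref{st est1}. I expect the only substantive difficulty to be this exponent bookkeeping: the assumption $m\ge\frac{2}{3}$ is precisely what forces $q\theta$ to collapse to $2$ in the Gagliardo--Nirenberg step, and simultaneously what makes the second Hölder factor in both \eqref{st est21} and \eqref{st est4} reduce to $(n_\varepsilon+\varepsilon)^{(3m+2)/3}$, so that every estimate closes with $\varepsilon$--independent constants.
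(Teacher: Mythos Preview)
Your proposal is correct and follows essentially the same route as the paper: \eqref{st est11} and \eqref{st est2} are read off directly from the energy estimates, \eqref{st est1} and \eqref{st est3} come from the same Gagliardo--Nirenberg interpolations you describe, and \eqref{st est21} and \eqref{st est4} are obtained by the same H\"older/Young splittings with the same conjugate exponents. The only cosmetic differences are that for \eqref{st est21} the paper pairs $\tfrac{D_\varepsilon(n_\varepsilon)}{n_\varepsilon}|\nabla n_\varepsilon|^2$ with $(D_\varepsilon(n_\varepsilon)n_\varepsilon)^{(3m+2)/(3m)}$ (invoking \eqref{st est2}) rather than your pair $(n_\varepsilon+\varepsilon)^{m-2}|\nabla n_\varepsilon|^2$ and $(n_\varepsilon+\varepsilon)^{m}$, and for \eqref{st est4} the paper applies Young's inequality where you apply H\"older, but the exponents and the resulting bounds are identical.
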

\begin{proof}
From Lemma \ref{Lem energy I2} we know that there exists $C_1>0$ such that
\begin{equation}\label{est. m2}
\int_0^T\int_{\Omega}(n_{\varepsilon}+\varepsilon)^{m-2}|\nabla n_{\varepsilon}|^2\leq C_1(T+1)\quad\mbox{for all}\ T>0.
\end{equation}
Then, (\ref{st est11}) is a direct consequence of (\ref{est. m2}). Due to the fact that $\Omega$ is bounded, we only need to prove (\ref{st est1}) with $p=\frac{3m+2}{3}$. We employ the Gagliardo-Nirenberg inequality to find $C_2>0$ and $C_3>0$ fulfilling
\begin{eqnarray*}
\int_0^T\int_{\Omega}(n_{\varepsilon}+\varepsilon)^{\frac{3m+2}{3}}&=&\int_0^T\left\|(n_{\varepsilon}+\varepsilon)^{\frac{m}{2}}\right\|
                                                        ^{\frac{2(3m+2)}{3m}}_{L^{\frac{2(3m+2)}{3m}}(\Omega)}\nonumber\\
           &\leq&C_2\left(\left\|\nabla (n_{\varepsilon}+\varepsilon)^{\frac{m}{2}}\right\|^{\frac{3m}{3m+2}}_{L^{2}(\Omega)}\cdot
                     \left\|(n_{\varepsilon}+\varepsilon)^{\frac{m}{2}}\right\|^{\frac{2}{3m+2}}_{L^{\frac{2}{m}}(\Omega)}
                     +\left\|(n_{\varepsilon}+\varepsilon)^{\frac{m}{2}}\right\|_{L^{\frac{2}{m}}(\Omega)}\right)^{\frac{2(3m+2)}{3m}}\nonumber\\
           &=&C_2\left(\left\|\nabla (n_{\varepsilon}+\varepsilon)^{\frac{m}{2}}\right\|^{2}_{L^{2}(\Omega)}\cdot
                     \left\|(n_{\varepsilon}+\varepsilon)^{\frac{m}{2}}\right\|^{\frac{4}{3m}}_{L^{\frac{2}{m}}(\Omega)}
                     +\left\|(n_{\varepsilon}+\varepsilon)^{\frac{m}{2}}\right\|^{\frac{2(3m+2)}{3m}}_{L^{\frac{2}{m}}(\Omega)}\right)\nonumber\\
           &\leq&C_3(T+1)\quad\mbox{for all}\ T>0.
\end{eqnarray*}
Recalling the proof in Lemma \ref{Lem energy I1}, we obtain
\begin{equation*}
y'_{\varepsilon}(t)+\frac{1}{2K}\int_{\Omega}\frac{D_{\varepsilon}(n_{\varepsilon})}{n_{\varepsilon}}|\nabla n_{\varepsilon}|^2\leq K\quad\mbox{for all}\ T>0,
\end{equation*}
where $y_{\varepsilon}$ and $K$ are provided by (\ref{y definition}) and Lemma \ref{Lem energy I}, respectively. Integrating this in time over $(0, T)$ yields (\ref{st est2}). By H\"older's inequality, (\ref{st est1}) and (\ref{st est2}), we can find $C_4>0$ such that
\begin{eqnarray*}
\int_0^T\int_{\Omega}\left(D_{\varepsilon}(n_{\varepsilon})\nabla n_{\varepsilon}\right)^{\frac{3m+2}{3m+1}}
  &\leq&\left(\int_0^T\int_{\Omega}\frac{D_{\varepsilon}(n_{\varepsilon})}{n_{\varepsilon}}|\nabla n_{\varepsilon}|^2\right)^{\frac{3m+2}{6m+2}}
     \left(\int_0^T\int_{\Omega}(D_{\varepsilon}(n_{\varepsilon})n_{\varepsilon})^{\frac{3m+2}{3m}}\right)^{\frac{3m}{6m+2}}\\
  &\leq&D_2^{\frac{3m+2}{6m+2}}\left(\int_0^T\int_{\Omega}\frac{D_{\varepsilon}(n_{\varepsilon})}{n_{\varepsilon}}|\nabla n_{\varepsilon}|^2\right)^{\frac{3m+2}{6m+2}}
     \left(\int_0^T\int_{\Omega}(n_{\varepsilon}+\varepsilon)^{\frac{3m+2}{3}}\right)^{\frac{3m}{6m+2}}\\
  &\leq&C_4(T+1)\quad\mbox{for all}\ T>0.
\end{eqnarray*}
Using the Gagliardo-Nirenberg inequality and Lemma \ref{Lem energy I2} again we have
\begin{eqnarray*}
\int_0^T\int_{\Omega}|u_{\varepsilon}|^{\frac{10}{3}}
     &=&\int_0^T\left\|u_{\varepsilon}(t)\right\|^{\frac{10}{3}}_{L^{\frac{10}{3}}(\Omega)}\\
     &\leq&C_5\int_0^T\left(\|\nabla u_{\varepsilon}(t)\|^{2}_{L^{2}(\Omega)}\cdot\|u_{\varepsilon}(t)\|^{\frac{4}{3}}_{L^{2}(\Omega)}
           +\|u_{\varepsilon}(t)\|^{\frac{10}{3}}_{L^{2}(\Omega)}\right)\\
     &\leq&C_6(T+1)\quad\mbox{for all}\ T>0
\end{eqnarray*}
with $C_5>0$ and $C_6>0$. Finally, we prove (\ref{st est4}). Since $\frac{2}{3}\leq m\leq2$, applying (\ref{est. m2}) and Young's inequality we get $C_7>0$ and $C_8>0$ such that
\begin{eqnarray*}
\int_0^T\int_{\Omega}\left|\nabla n_{\varepsilon}\right|^{\frac{3m+2}{4}}&=&\int_0^T\int_{\Omega}n^{\frac{(m-2)(3m+2)}{8}}_{\varepsilon}
\left|\nabla n_{\varepsilon}\right|^{\frac{3m+2}{4}}n^{\frac{(2-m)(3m+2)}{8}}_{\varepsilon}\\
&\leq&C_7\left(\int_0^T\int_{\Omega}n_{\varepsilon}^{m-2}|\nabla n_{\varepsilon}|^2+\int_0^T\int_{\Omega}n^{\frac{3m+2}{3}}_{\varepsilon}\right)\\
&\leq& C_8(T+1),\quad\mbox{for all}\ T>0.
\end{eqnarray*}
This completes the proof.
\end{proof}

We derive an $L^p$-bound for $n_{\varepsilon}+\varepsilon$ and a estimate of space-time integral $\int_0^T\int_{\Omega}|\nabla (n_{\varepsilon}+\varepsilon)^{\bar{r}}|^2$ as a supplement to the regularity property concerning $n_{\varepsilon}$ in the case $m>2$.

\begin{lem}\label{Lem Lp of u}
Let $m>\frac{10}{9}$. For all $\varepsilon\in(0,1)$, there exists $C(T)>0$ and $C>0$ such that
\begin{equation}\label{est. u Lp}
\int_{\Omega}(n_{\varepsilon}+\varepsilon)^{p}\leq C(T)\quad\mbox{for all}\ t>0.
\end{equation}
with $1\leq p<9(m-1)$ and
\begin{equation}\label{est. nabla u Lp}
\int_{\Omega}\left|\nabla(n_{\varepsilon}+\varepsilon)^{\bar{r}}\right|^2\leq C(T+1)\quad\mbox{for all}\ T>0.
\end{equation}
with $\frac{m}{2}<\bar{r}<5(m-1)$.
\end{lem}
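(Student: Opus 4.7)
The plan is to mimic the $L^p$-testing strategy used in the proof of Lemma \ref{Lem global exi.}, but now with $\varepsilon$-uniform constants (using $sF'_\varepsilon(s)\leq s$ instead of the $\varepsilon^{-1}$ bound) and with a careful Gagliardo-Nirenberg interpolation that exploits the $m>\tfrac{10}{9}$ threshold. Concretely, I would test the first equation of (\ref{RegularizedCNS}) with $p(n_\varepsilon+\varepsilon)^{p-1}$, integrate by parts exactly as in (\ref{global exi.3}), use (\ref{D2}) together with Young's inequality to absorb the chemotactic cross-term into the diffusion part, and arrive at a differential inequality of the form
\begin{equation*}
\frac{d}{dt}\int_\Omega(n_\varepsilon+\varepsilon)^p+c_1\int_\Omega\bigl|\nabla(n_\varepsilon+\varepsilon)^{\bar r}\bigr|^2\leq c_2\int_\Omega(n_\varepsilon+\varepsilon)^{p+1-m}|\nabla c_\varepsilon|^2,
\end{equation*}
where $\bar r:=(m+p-1)/2$ and $c_1,c_2>0$ are independent of $\varepsilon$.

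To treat the right-hand side I would combine the $L^\infty$-bound $c_\varepsilon\leq M$ from Lemma \ref{Lem L1 of u c} with the space-time bound $\int_0^T\!\!\int_\Omega|\nabla c_\varepsilon|^4c_\varepsilon^{-3}\leq C(T+1)$ from Lemma \ref{Lem energy I2}; writing $|\nabla c_\varepsilon|^2=c_\varepsilon^{3/2}(|\nabla c_\varepsilon|^4/c_\varepsilon^3)^{1/2}$ and applying Cauchy-Schwarz in $(x,t)$ reduces the time integral of the bad term to
\begin{equation*}
C(T+1)^{1/2}\Bigl(\int_0^T\!\!\int_\Omega(n_\varepsilon+\varepsilon)^{2(p+1-m)}\Bigr)^{1/2}.
\end{equation*}
Setting $v_\varepsilon:=(n_\varepsilon+\varepsilon)^{\bar r}$, the remaining $L^{2(p+1-m)}(\Omega)$-norm of $n_\varepsilon+\varepsilon$ equals $\|v_\varepsilon\|_{L^\sigma(\Omega)}^{\sigma/2}$ with $\sigma=2(p+1-m)/\bar r$, so the Gagliardo-Nirenberg inequality applied to $v_\varepsilon$, interpolated between $L^{1/\bar r}(\Omega)$ (controlled by the conserved mass via (\ref{est. u L1})) and $W^{1,2}(\Omega)$, turns it into an expression of the form $C(\|\nabla v_\varepsilon\|_{L^2}^{\sigma\theta/2}+1)$ for a suitable interpolation exponent $\theta\in(0,1)$.

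Checking the arithmetic of the Gagliardo-Nirenberg exponents, the bootstrap closes precisely under the constraints $\bar r\in(m/2,5(m-1))$, equivalently $p\in(1,9(m-1))$; the assumption $m>\tfrac{10}{9}$ is exactly what makes this interval nonempty. The case $p=1$ is trivial by (\ref{est. u L1}), so we may assume $p>1$ throughout. Combining the previous steps and absorbing yields $\frac{d}{dt}\int_\Omega(n_\varepsilon+\varepsilon)^p+\tfrac{c_1}{2}\int_\Omega|\nabla v_\varepsilon|^2\leq C(T+1)$; integration in time together with a standard Gronwall comparison then delivers (\ref{est. u Lp}), while time-integrating the gradient term delivers (\ref{est. nabla u Lp}).

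I expect the sharpness of the Gagliardo-Nirenberg step to be the main obstacle. If one merely applies Young's inequality pointwise in time to absorb $\|\nabla v_\varepsilon\|_{L^2}^{\sigma\theta/2}\cdot\|\nabla c_\varepsilon\|_{L^4}^2$ into $\|\nabla v_\varepsilon\|_{L^2}^2$, only the weaker range $p\leq 3m-\tfrac{7}{3}$ is reached. Pushing the argument up to the advertised range $p<9(m-1)$ requires Hölder-in-time coupled carefully with the $L^4_{t,x}$-integrability of $\nabla c_\varepsilon$ so that the exponent of $\|\nabla v_\varepsilon\|_{L^2}$ appearing on the right-hand side, after the Hölder split in time, matches exactly the $L^2_tL^2_x$-control of $\nabla v_\varepsilon$ granted by the preceding energy analysis; the restriction $m>\tfrac{10}{9}$ is sharp for this balance to succeed.
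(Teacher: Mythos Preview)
Your setup is right: testing with $p(n_\varepsilon+\varepsilon)^{p-1}$, using $n_\varepsilon F'_\varepsilon(n_\varepsilon)\leq n_\varepsilon\leq n_\varepsilon+\varepsilon$ for $\varepsilon$-independence, and arriving at
\[
\frac{d}{dt}\int_\Omega(n_\varepsilon+\varepsilon)^p+\frac{p(p-1)D_1}{2}\int_\Omega(n_\varepsilon+\varepsilon)^{m+p-3}|\nabla n_\varepsilon|^2\leq C\left\|(n_\varepsilon+\varepsilon)^{\frac{p-m+1}{2}}\right\|_{L^4}^2\|\nabla c_\varepsilon\|_{L^4}^2
\]
is exactly how the paper begins. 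You also correctly compute that interpolating the $L^4$-norm on the right from the $L^1$ mass bound alone only closes for $p\leq 3m-\tfrac{7}{3}$.

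The gap is in your proposed fix. The ``H\"older-in-time'' refinement you describe cannot improve this range. After Cauchy--Schwarz in $(x,t)$ and Gagliardo--Nirenberg from $L^1$, the right-hand side becomes $C(T+1)^{1/2}\bigl(\int_0^T\|\nabla v_\varepsilon\|_{L^2}^{\sigma\theta}\bigr)^{1/2}$, and absorption into $\int_0^T\|\nabla v_\varepsilon\|_{L^2}^2$ still forces $\sigma\theta\leq 2$, which is the same constraint $p\leq 3m-\tfrac{7}{3}$. Shifting H\"older exponents between space and time does not help either, because the only available control on $\nabla c_\varepsilon$ is in $L^4_{t,x}$, and any split $(a,a')$ with $a'\leq 2$ makes the $n_\varepsilon$-factor harder, not easier. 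There is no ``preceding energy analysis'' giving $L^2_tL^2_x$-control of $\nabla(n_\varepsilon+\varepsilon)^{\bar r}$ for $\bar r>m/2$; that is precisely what this lemma is supposed to establish.

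What the paper actually does is a \emph{bootstrap}: at each stage it interpolates not from $L^1$ but from the $L^{p_i}$ bound obtained in the previous step. Concretely, if $\int_\Omega(n_\varepsilon+\varepsilon)^{p_i}\leq C(T)$ is already known, Gagliardo--Nirenberg between $L^{p_i}$ and $W^{1,2}$ closes the Young inequality exactly when $p=3(m-1)+\tfrac{2}{3}p_i$. Starting from $p_1=1$ this generates the increasing sequence $p_{i+1}=3(m-1)+\tfrac{2}{3}p_i$, whose fixed point is $9(m-1)$; the hypothesis $m>\tfrac{10}{9}$ is precisely the condition $p_2>p_1$, so that the iteration actually gains. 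Any $p<9(m-1)$ is reached in finitely many steps, and the gradient bound (\ref{est. nabla u Lp}) then follows by time-integrating the resulting differential inequality.
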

\begin{proof}
It is based on a bootstrap argument (\cite{Tao&Winkler-AIHP-2013}, see also \cite{Horstmann&Winkler-JDE-2005,TW-CPDE-2007}). We multiply the first equation in (\ref{RegularizedCNS}) by $p (n_{\varepsilon}+\varepsilon)^{p-1}$ to deduce that
\begin{eqnarray}\label{Lp of u1}
&&\frac{d}{dt}\int_{\Omega}(n_{\varepsilon}+\varepsilon)^p+p(p-1)\int_{\Omega}(n_{\varepsilon}
+\varepsilon)^{p-2}D_{\varepsilon}(n_{\varepsilon})|\nabla n_{\varepsilon}|^2\nonumber\\
&&\quad=p(p-1)\int_{\Omega}n_{\varepsilon}(n_{\varepsilon}+\varepsilon)^{p-2}F'_{\varepsilon}(n_{\varepsilon})\chi(c_{\varepsilon})\nabla c_{\varepsilon}\cdot \nabla n_{\varepsilon}
\end{eqnarray}
for all $t>0$. However, unlike the proof of Lemma \ref{Lem global exi.} we deal with $n_{\varepsilon}F'_{\varepsilon}(n_{\varepsilon})$ together, because our goal is to get an $\varepsilon$-independent bound (\ref{est. u Lp}). More precisely, from (\ref{Lp of u1}) and (\ref{RF1}) we have
\begin{eqnarray}\label{Lp of u2}
&&\frac{d}{dt}\int_{\Omega}(n_{\varepsilon}+\varepsilon)^p+p(p-1)D_1\int_{\Omega}(n_{\varepsilon}+\varepsilon)^{m+p-3}|\nabla n_{\varepsilon}|^2\nonumber\\
&&\quad\leq p(p-1)\max_{s\in[0,M]}\chi(s)\int_{\Omega}(n_{\varepsilon}+\varepsilon)^{p-1}\nabla c_{\varepsilon}\cdot
        \nabla n_{\varepsilon}
\end{eqnarray}
for all $t>0$. Applying the H\"older and Young inequalities in the right-hand side of (\ref{Lp of u2}), we have $C_1>0$ such that
\begin{eqnarray}\label{Lp of u3}
&&\frac{d}{dt}\int_{\Omega}(n_{\varepsilon}+\varepsilon)^p+p(p-1)D_1\int_{\Omega}(n_{\varepsilon}+\varepsilon)^{m+p-3}|\nabla n_{\varepsilon}|^2\nonumber\\
&&\quad\leq p(p-1)\max_{s\in[0,M]}\chi(s)\left\|(n_{\varepsilon}+\varepsilon)^{\frac{m+p-3}{2}}\nabla n_{\varepsilon}\right\|_{L^{2}(\Omega)}\cdot
      \left\|(n_{\varepsilon}+\varepsilon)^{\frac{p-m+1}{2}}\right\|_{L^{4}(\Omega)}\cdot\|\nabla c_{\varepsilon}\|_{L^{4}(\Omega)} \nonumber\\
&&\quad\leq \frac{p(p-1)D_1}{2}\int_{\Omega}(n_{\varepsilon}+\varepsilon)^{m+p-3}|\nabla n_{\varepsilon}|^2+C_1
      \left\|(n_{\varepsilon}+\varepsilon)^{\frac{p-m+1}{2}}\right\|^2_{L^{4}(\Omega)}\cdot\|\nabla c_{\varepsilon}\|^2_{L^{4}(\Omega)}
\end{eqnarray}
for all $t>0$. Assume that
\begin{equation*}
\int_{\Omega}(n_{\varepsilon}+\varepsilon)^{p_i}\leq C(T)\quad\mbox{for all}\ t>0
\end{equation*}
holds with some $p_i\geq1$ (this is true for $p_1:=1$ by (\ref{est. u L1}) and $\varepsilon<1$). The Gagliardo-Nirenberg inequality gives $C_2>0$ such that
\begin{eqnarray*}
\left\|(n_{\varepsilon}+\varepsilon)^{\frac{p-m+1}{2}}\right\|^2_{L^{4}(\Omega)}
&=&\left\|(n_{\varepsilon}+\varepsilon)^{\frac{p+m-1}{2}}\right\|^{\frac{2(p-m+1)}{p+m-1}}_{L^{\frac{4(p-m+1)}{p+m-1}}(\Omega)}\\
&\leq&C_2\Bigg{(}\left\|\nabla (n_{\varepsilon}+\varepsilon)^{\frac{p+m-1}{2}}\right\|^{\alpha}_{L^{2}(\Omega)}\cdot
      \left\|(n_{\varepsilon}+\varepsilon)^{\frac{p+m-1}{2}}\right\|^{1-\alpha}_{L^{\frac{2p_i}{p+m-1}}(\Omega)}\\
&\quad&+
      \left\|(n_{\varepsilon}+\varepsilon)^{\frac{p+m-1}{2}}\right\|_{L^{\frac{2p_i}{p+m-1}}(\Omega)}\Bigg{)}^{\frac{2(p-m+1)}{p+m-1}}
\end{eqnarray*}
for all $t>0$, where
\begin{equation*}
\alpha=\frac{1-\frac{p_i}{2(p-m+1)}}{1-\frac{p_i}{3(p+m-1)}}\in(0, 1).
\end{equation*}
If
\begin{equation*}
\frac{2(p-m+1)}{p+m-1}\alpha=1,
\end{equation*}
which is equivalent to $p=3(m-1)+\frac{2}{3}p_i$, and therefore by Young's inequality
\begin{eqnarray*}
&&C_1\left\|(n_{\varepsilon}+\varepsilon)^{\frac{p-m+1}{2}}\right\|^2_{L^{4}(\Omega)}\cdot\|\nabla c_{\varepsilon}\|^2_{L^{4}(\Omega)}\\
&&\quad\leq C_3\left(\left\|\nabla (n_{\varepsilon}+\varepsilon)^{\frac{p+m-1}{2}}\right\|_{L^{2}(\Omega)}
      +1\right)\|\nabla c_{\varepsilon}\|^2_{L^{4}(\Omega)}\\
&&\quad\leq \frac{p(p-1)D_1}{4}\int_{\Omega}(n_{\varepsilon}+\varepsilon)^{m+p-3}|\nabla n_{\varepsilon}|^2+C_4\|\nabla c_{\varepsilon}\|^4_{L^{4}(\Omega)}+C_5
\end{eqnarray*}
for all $t>0$ with certain positive constants $C_3$, $C_4$ and $C_5$. Substituting this into (\ref{Lp of u3}), we obtain
\begin{eqnarray}\label{ODI n+narepsilon}
\frac{d}{dt}\int_{\Omega}(n_{\varepsilon}+\varepsilon)^p+\frac{p(p-1)D_1}{4}\int_{\Omega}(n_{\varepsilon}+\varepsilon)^{m+p-3}|\nabla n_{\varepsilon}|^2\leq C_4\|\nabla c_{\varepsilon}\|^4_{L^{4}(\Omega)}+C_5
\end{eqnarray}
for all $t>0$. By integration, we finally get
\begin{equation*}
\int_{\Omega}(n_{\varepsilon}+\varepsilon)^{p}\leq C(T)\quad\mbox{for all}\ t>0.
\end{equation*}
with $p=3(m-1)+\frac{2}{3}p_i$. By this iterative procedure, there exists a sequence $\{p_i\}$ such that
\begin{equation*}
p_{i+1}=3(m-1)+\frac{2}{3}p_i.
\end{equation*}
It is easy to check that the sequence $\{p_i\}$ is increasing and $p_i\rightarrow 9(m-1)$ as $i\rightarrow\infty$. Therefore, we can reach any $p<9(m-1)$ by finite steps and (\ref{est. u Lp}) is thereby proved. Another integration of (\ref{ODI n+narepsilon}) yields
\begin{eqnarray*}
\int_0^T\int_{\Omega}\left|\nabla(n_{\varepsilon}+\varepsilon)^{\frac{m+p-1}{2}}\right|^2&\leq& C_6\left(\int_{\Omega}(n_{0\varepsilon}+1)^p+\int_0^T\int_{\Omega}|\nabla c_{\varepsilon}|^4+1\right)\\
&\leq& C_7(T+1)\quad\mbox{for all}\ T>0
\end{eqnarray*}
with $C_6>0$ and $C_7>0$. This proves (\ref{est. nabla u Lp}).
\end{proof}

In order to derive strong compactness properties, we also need some estimates concerning the time derivative of the solution.
\begin{lem}\label{Lem time derivative}
Let $\gamma:=\max\{1, \frac{m}{2}\}$. There exists $C>0$ such that for all $\varepsilon\in(0,1)$ we have
\begin{equation}\label{time derivative est1}
\int_0^T\left\|\frac{\partial}{\partial t}n_{\varepsilon}^{\gamma}\right\|_{({W^{2,q}(\Omega)})^{*}}dt\leq C(T+1)\quad\mbox{for all}\ T>0
\end{equation}
with some $q>3$, and
\begin{equation}\label{time derivative est2}
\int_0^T\left\|\frac{\partial}{\partial t}\sqrt{c_{\varepsilon}}\right\|^{\frac{5}{3}}_{({W^{1,\frac{5}{2}}(\Omega)})^{*}}dt\leq C(T+1)\quad\mbox{for all}\ T>0
\end{equation}
as well as
\begin{equation}\label{time derivative est3}
\int_0^T\left\|\frac{\partial}{\partial t}u_{\varepsilon }\right\|^{\frac{5}{4}}_{({W_{0,\sigma}^{1,\frac{5}{2}}(\Omega)})^{*}}dt\leq C(T+1)\quad\mbox{for all}\ T>0.
\end{equation}
\end{lem}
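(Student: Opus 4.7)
Each of the three estimates will be derived by the same template: test the relevant equation of (\ref{RegularizedCNS}) against a smooth function from the appropriate Sobolev space, integrate by parts to shift derivatives onto the test function, and then control every resulting term through H\"older's inequality together with the space-time bounds provided by Lemmas~\ref{Lem energy I2} and~\ref{Lem spati-temporal est.} (and, when $m>2$, Lemma~\ref{Lem Lp of u}).

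For (\ref{time derivative est1}) I would fix $q>3$ so that $W^{2,q}(\Omega)\hookrightarrow W^{1,\infty}(\Omega)$ in the three-dimensional setting, and use $\partial_t n_\varepsilon^\gamma=\gamma n_\varepsilon^{\gamma-1}\partial_t n_\varepsilon$ together with the first equation of (\ref{RegularizedCNS}). Testing against an arbitrary $\varphi\in W^{2,q}(\Omega)$, integrating by parts and invoking $\nabla\cdot u_\varepsilon=0$ yields a sum of terms involving $n_\varepsilon^\gamma u_\varepsilon\cdot\nabla\varphi$, $n_\varepsilon^{\gamma-1}D_\varepsilon(n_\varepsilon)\nabla n_\varepsilon\cdot\nabla\varphi$, a pointwise contribution $n_\varepsilon^{\gamma-2}D_\varepsilon(n_\varepsilon)|\nabla n_\varepsilon|^2\varphi$ (which drops out when $\gamma=1$), and the chemotactic integral $n_\varepsilon^\gamma F'_\varepsilon(n_\varepsilon)\chi(c_\varepsilon)\nabla c_\varepsilon\cdot\nabla\varphi$. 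Thanks to $\|F'_\varepsilon\|_\infty\le 1$, the upper bound $\|c_\varepsilon\|_\infty\le M$, and the uniformly controlled space-time norms of $(n_\varepsilon+\varepsilon)^{(3m+2)/3}$, $(D_\varepsilon(n_\varepsilon)\nabla n_\varepsilon)^{(3m+2)/(3m+1)}$, $|\nabla c_\varepsilon|^4$ and $|u_\varepsilon|^{10/3}$, H\"older in space and then in time yields the required $L^1_t$ bound.

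For (\ref{time derivative est2}) the crucial preliminary is to recast the $c_\varepsilon$-equation in terms of $\sqrt{c_\varepsilon}$. Dividing the second equation of (\ref{RegularizedCNS}) by $2\sqrt{c_\varepsilon}$ and using the pointwise identities $\Delta c_\varepsilon/(2\sqrt{c_\varepsilon})=\Delta\sqrt{c_\varepsilon}+|\nabla c_\varepsilon|^2/(4c_\varepsilon^{3/2})$ and $u_\varepsilon\cdot\nabla c_\varepsilon/(2\sqrt{c_\varepsilon})=u_\varepsilon\cdot\nabla\sqrt{c_\varepsilon}$ produces
\begin{equation*}
\partial_t\sqrt{c_\varepsilon}=\Delta\sqrt{c_\varepsilon}+\frac{|\nabla c_\varepsilon|^2}{4c_\varepsilon^{3/2}}-u_\varepsilon\cdot\nabla\sqrt{c_\varepsilon}-\frac{F_\varepsilon(n_\varepsilon)f(c_\varepsilon)}{2\sqrt{c_\varepsilon}}.
\end{equation*}
Testing against $\varphi\in W^{1,5/2}(\Omega)$, integrating by parts, and rewriting the convective piece as $-\int_\Omega\sqrt{c_\varepsilon}u_\varepsilon\cdot\nabla\varphi$ via $\nabla\cdot u_\varepsilon=0$, the four resulting contributions become controllable in $L^{5/3}_t$ via $\int_0^T\int_\Omega|\nabla c_\varepsilon|^4/c_\varepsilon^3$, $\|c_\varepsilon\|_\infty$, $\int_0^T\int_\Omega|u_\varepsilon|^{10/3}$, the boundedness of $f(c_\varepsilon)/\sqrt{c_\varepsilon}$ (a consequence of $f(0)=0$, $f\in C^1$ and $c_\varepsilon\le M$), and the $L^{5/3}$-control of $n_\varepsilon$.

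For (\ref{time derivative est3}) I would test the third equation of (\ref{RegularizedCNS}) against $\varphi\in W^{1,5/2}_{0,\sigma}(\Omega)$, so that the pressure drops out by $\nabla\cdot\varphi=0$; the nonlinear transport term is reshaped into $\kappa\int_\Omega(u_\varepsilon\otimes Y_\varepsilon u_\varepsilon):\nabla\varphi$ upon using $\nabla\cdot Y_\varepsilon u_\varepsilon=0$, and is then estimated by $C\|u_\varepsilon\|_{L^{10/3}}^2\|\nabla\varphi\|_{L^{5/2}}$ via H\"older with the exponent triple $(10/3,10/3,5/2)$ together with the $L^{10/3}$-boundedness $\|Y_\varepsilon u_\varepsilon\|_{L^{10/3}}\leq C\|u_\varepsilon\|_{L^{10/3}}$ of the Yosida approximation. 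The viscous piece contributes $\|\nabla u_\varepsilon\|_{L^{5/3}}\|\nabla\varphi\|_{L^{5/2}}$ and the buoyancy piece $\|n_\varepsilon\|_{L^{5/3}}\|\nabla\Phi\|_\infty\|\varphi\|_{L^{5/2}}$. The step I expect to demand the most care---and which constitutes the main obstacle---is the matching of space-time exponents: verifying that the quadratic nonlinearity lands in $L^{5/4}_t$ through $\int_0^T\|u_\varepsilon\|_{L^{10/3}}^{5/2}\leq C(T+1)$, and, in the borderline case $m=\frac{2}{3}$ of (\ref{time derivative est1}), interpolating between the $L^\infty_t(L^1_x)$ mass bound and the space-time $L^{(3m+2)/3}$ control so that the convective and chemotactic integrals remain integrable in time.
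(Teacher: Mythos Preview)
Your plan is essentially the paper's own argument: testing the equations against $W^{2,q}$, $W^{1,5/2}$ and $W^{1,5/2}_{0,\sigma}$ functions respectively, integrating by parts, and controlling each piece via the space--time bounds of Lemmas~\ref{Lem energy I2}, \ref{Lem spati-temporal est.} and (for $m>2$) \ref{Lem Lp of u}. The paper likewise only details (\ref{time derivative est1}) and refers the other two to \cite{winkler2014global}, exactly along the lines you sketch.

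One small omission in your list of terms for (\ref{time derivative est1}): when $\gamma=m/2>1$, integrating the chemotactic term by parts against $\gamma n_\varepsilon^{\gamma-1}\varphi$ produces, besides the contribution $n_\varepsilon^{\gamma}F'_\varepsilon(n_\varepsilon)\chi(c_\varepsilon)\nabla c_\varepsilon\cdot\nabla\varphi$ that you record, an additional ``pointwise'' cross-term
\[
\gamma(\gamma-1)\int_\Omega n_\varepsilon^{\gamma-1}F'_\varepsilon(n_\varepsilon)\chi(c_\varepsilon)\,\nabla c_\varepsilon\cdot\nabla n_\varepsilon\,\varphi,
\]
entirely analogous to the diffusive pointwise contribution you do list. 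It is handled by Young's inequality together with (\ref{st est11}) and (\ref{global exi.2}), so this does not affect the validity of your scheme. Likewise, your claimed ``$L^{5/3}$-control of $n_\varepsilon$'' in the argument for (\ref{time derivative est2}) is not literally available when $\tfrac{2}{3}\le m<1$, but your closing remark about interpolating between the mass bound and the gradient estimate is precisely the remedy (one uses the embedding $W^{1,5/2}(\Omega)\hookrightarrow L^{15}(\Omega)$ to lower the required spatial exponent on $n_\varepsilon$).
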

\begin{proof}
Multiplying the first equation in (\ref{RegularizedCNS}) by $\gamma n_{\varepsilon}^{\gamma-1}\varphi$ with $\varphi\in C^{\infty}(\bar{\Omega})$ and integrating by parts, we obtain
\begin{eqnarray*}
\left|\int_{\Omega}(n^{\gamma}_{\varepsilon})_t\varphi\right|&=&\bigg{|}-\gamma(\gamma-1)\int_{\Omega}D_{\varepsilon}(n_{\varepsilon})
          n^{\gamma-2}_{\varepsilon}|\nabla n_{\varepsilon}|^2\varphi -\gamma\int_{\Omega}D_{\varepsilon}(n_{\varepsilon})
          n^{\gamma-1}_{\varepsilon}\nabla n_{\varepsilon}\cdot \nabla\varphi\\
  &\quad&+\gamma(\gamma-1)\int_{\Omega}n^{\gamma-1}_{\varepsilon}F'_{\varepsilon}(n_{\varepsilon})\chi(c_{\varepsilon})\nabla c_{\varepsilon}\cdot\nabla n_{\varepsilon}\varphi
  \\&\quad&+\gamma\int_{\Omega}n^{\gamma}_{\varepsilon}F'_{\varepsilon}(n_{\varepsilon})
  \chi(c_{\varepsilon})\nabla c_{\varepsilon}\cdot\nabla\varphi
    +\int_{\Omega}n^{\gamma}_{\varepsilon}u_{\varepsilon}\cdot\nabla\varphi\bigg{|}\\
    &\leq&\bigg{(}\gamma(\gamma-1)\int_{\Omega}D_{\varepsilon}(n_{\varepsilon})
          n^{\gamma-2}_{\varepsilon}|\nabla n_{\varepsilon}|^2+\gamma\int_{\Omega}D_{\varepsilon}(n_{\varepsilon})
          n^{\gamma-1}_{\varepsilon}|\nabla n_{\varepsilon}|\\
  &\quad&+\gamma(\gamma-1)\int_{\Omega}|n^{\gamma-1}_{\varepsilon}F'_{\varepsilon}(n_{\varepsilon})\chi(c_{\varepsilon})\nabla c_{\varepsilon}\cdot\nabla n_{\varepsilon}|\\
  &\quad&+\gamma\int_{\Omega}|n^{\gamma}_{\varepsilon}F'_{\varepsilon}(n_{\varepsilon})
  \chi(c_{\varepsilon})\nabla c_{\varepsilon}|+\int_{\Omega}n^{\gamma}_{\varepsilon}|u_{\varepsilon}|\bigg{)}\|\varphi\|_{W^{1,\infty}(\Omega)}
\end{eqnarray*}
for all $t>0$. Due to the embedding $W^{2,q}(\Omega)\hookrightarrow W^{1,\infty}(\Omega)$ for $q>3$, we deduce $C_1>0$ such that
\begin{eqnarray}\label{time E1}
&&\int_0^T\left\|(n^{\gamma}_{\varepsilon})_t\right\|_{({W^{2,q}(\Omega)})^{*}}dt\nonumber\\
&&\quad\leq C_1\bigg{(}\gamma(\gamma-1)\int_0^T\int_{\Omega}D_{\varepsilon}(n_{\varepsilon})
          n^{\gamma-2}_{\varepsilon}|\nabla n_{\varepsilon}|^2+\gamma\int_0^T\int_{\Omega}D_{\varepsilon}(n_{\varepsilon})
          n^{\gamma-1}_{\varepsilon}|\nabla n_{\varepsilon}|\nonumber\\
&&\quad+\gamma(\gamma-1)\int_0^T\int_{\Omega}|n^{\gamma-1}_{\varepsilon}F'_{\varepsilon}(n_{\varepsilon})\chi(c_{\varepsilon})
          \nabla c_{\varepsilon}\cdot\nabla n_{\varepsilon}|\nonumber\\
&&\quad+\gamma\int_0^T\int_{\Omega}|n^{\gamma}_{\varepsilon}F'_{\varepsilon}(n_{\varepsilon})
          \chi(c_{\varepsilon})\nabla c_{\varepsilon}|+\int_0^T\int_{\Omega}n^{\gamma}_{\varepsilon}|u_{\varepsilon}|\bigg{)}\nonumber\\
&&=:I_1+I_2+I_3+I_4+I_5
\end{eqnarray}
for all $T>0$. By (\ref{D2}), (\ref{est. nabla u Lp}), (\ref{st est11}) and Young's inequality, we can find $C_2>0$ such that
\begin{eqnarray}\label{time E10}
I_1
&\leq&D_2\gamma(\gamma-1)\int_0^T\int_{\Omega}(n_{\varepsilon}+\varepsilon)^{m}n^{\gamma-1}_{\varepsilon}|\nabla n_{\varepsilon}|^2\nonumber\\
&\leq&\frac{D_2(\gamma-1)}{m+1}\int_0^T\int_{\Omega}\nabla(n_{\varepsilon}+\varepsilon)^{m+1}\cdot \nabla n^{\gamma}_{\varepsilon}\nonumber\\
&\leq&\frac{D_2(\gamma-1)}{2(m+1)}\left(\int_0^T\int_{\Omega}\left|\nabla(n_{\varepsilon}+\varepsilon)^{m+1}\right|^2
      +\int_0^T\int_{\Omega}\left|\nabla n^{\gamma}_{\varepsilon}\right|^2\right)\nonumber\\
&\leq& C_2(T+1).
\end{eqnarray}
Employing (\ref{D2}), (\ref{st est2}) and Young's inequality we have $C_3>0$ such that
\begin{eqnarray}\label{time E11}
I_2&\leq&\frac{\gamma}{2}\int_0^T\int_{\Omega}\frac{D_{\varepsilon}(n_{\varepsilon})}{n_{\varepsilon}}|\nabla n_{\varepsilon}|^2
          +\frac{\gamma}{2}\int_0^T\int_{\Omega}D_{\varepsilon}(n_{\varepsilon})n^{2\gamma-1}_{\varepsilon}\nonumber\\
                &\leq&\frac{\gamma}{2}\int_0^T\int_{\Omega}\frac{D_{\varepsilon}(n_{\varepsilon})}{n_{\varepsilon}}|\nabla n_{\varepsilon}|^2
          +\frac{\gamma D_2}{2}\int_0^T\int_{\Omega}(n_{\varepsilon}+\varepsilon)^{m+2\gamma-2}\nonumber\\
                &\leq& C_3(T+1),
\end{eqnarray}
where we have used when $1\leq m\leq2$
\begin{equation*}
\int_0^T\int_{\Omega}(n_{\varepsilon}+\varepsilon)^{m+2\gamma-2}=\int_0^T\int_{\Omega}(n_{\varepsilon}+\varepsilon)^{m}\leq C_4(T+1)\quad\mbox{by}\  (\ref{st est1})
\end{equation*}
with $C_4>0$ and in the case $m>2$
\begin{equation*}
\int_0^T\int_{\Omega}(n_{\varepsilon}+\varepsilon)^{m+2\gamma-2}=\int_0^T\int_{\Omega}(n_{\varepsilon}+\varepsilon)^{2(m-1)}\leq C(T)\quad\mbox{by}\  (\ref{est. u Lp}).
\end{equation*}
From (\ref{RF1}), (\ref{est. c L infite}), (\ref{global exi.2}) and (\ref{st est11}), we estimate
\begin{eqnarray}\label{time E12}
I_3&\leq&\gamma(\gamma-1)C_5\int_0^T\int_{\Omega}|n^{\gamma-1}_{\varepsilon}\nabla n_{\varepsilon}\cdot\nabla c_{\varepsilon}|\nonumber\\
          &\leq&C_5(\gamma-1)\int_0^T\int_{\Omega}|\nabla n^{\gamma}_{\varepsilon}\cdot\nabla c_{\varepsilon}|\nonumber\\
          &\leq&\frac {C_5(\gamma-1)}{2}\left(\int_0^T\int_{\Omega}|\nabla n^{\gamma}_{\varepsilon}|^2+\frac{1}{2}\int_0^T\int_{\Omega}|\nabla c_{\varepsilon}|^4+\frac{1}{2}\right)\nonumber\\
          &\leq& C_6(T+1)
\end{eqnarray}
with $C_5>0$ and $C_6>0$. Applying (\ref{RF1}), (\ref{est. c L infite}), (\ref{global exi.2}) and (\ref{st est1}), we find $C_7>0$ and $C_8>0$ such that
\begin{eqnarray}\label{time E13}
I_4&\leq&C_7\left(\int_0^T\int_{\Omega}n^{\frac{4\gamma}{3}}_{\varepsilon}+\int_0^T\int_{\Omega}|\nabla c_{\varepsilon}|^{4}\right)\nonumber\\
  &\leq&C_8(T+1).
\end{eqnarray}
Moreover, we use (\ref{st est1}) and (\ref{st est3}) to give $C_9>0$ and $C_10>0$ fulfilling
\begin{eqnarray}\label{time E14}
I_5&\leq& C_9\left(\int_0^T\int_{\Omega}n^{\frac{10\gamma}{7}}_{\varepsilon}+\int_0^T\int_{\Omega}|u_{\varepsilon}|^{\frac{10}{3}}\right)\nonumber\\
  &\leq&C_{10}(T+1).
\end{eqnarray}
Then, (\ref{time derivative est1}) follows by combining (\ref{time E1})-(\ref{time E14}). Multiplying the second equation in (\ref{RegularizedCNS}) by $\frac{\varphi}{2\sqrt{c_{\varepsilon}}}$ with $\varphi\in C^{\infty}(\bar{\Omega})$ and the third by $\phi\in \left(C^{\infty}_{0,\sigma}(\bar{\Omega})\right)^3$, we obtain (\ref{time derivative est2}) and (\ref{time derivative est3}) in a completed similar manner (see \cite{winkler2014global} for details).
\end{proof}

\section{Global weak solutions for (\ref{CNS})-(\ref{initial c})}
We are now in the position to construct global weak solutions for (\ref{CNS})-(\ref{initial c}). Before going into details, let us first give the definition of weak solution.
\begin{defn}\label{global weak sol}
We call $(n, c, u)$ a {\it{global weak solution}} of (\ref{CNS})-(\ref{initial c}) if
\begin{equation*}
n\in L_{loc}^{1}(\bar{\Omega}\times[0, \infty)),\quad c\in L_{loc}^1([0, \infty);W^{1,1}(\Omega)),\quad u\in\big{(}L_{loc}^1([0, \infty);W_0^{1,1}(\Omega))\big{)}^3
\end{equation*}
such that $n\geq0$ and $c\geq0$ {\it a.e.} in $\Omega\times(0, \infty)$, and that
\begin{eqnarray*}
&&nf(c)\in L_{loc}^1([0, \infty);L^{1}(\Omega)),\\
&&D(n)\nabla n,\ n\chi(c)\nabla c,\ nu\ \mbox{and}\ cu\ \mbox{belong to}\left(L_{loc}^1([0, \infty);L^{1}(\Omega))\right)^3,\\
&&u\otimes u\in \left(L_{loc}^1([0, \infty);L^{1}(\Omega))\right)^{3\times3}
\end{eqnarray*}
and that
\begin{eqnarray*}
    &&\int_0^{\infty}\int_{\Omega}n_t\phi_1-\int_0^{\infty}\int_{\Omega}nu\cdot\nabla \phi_1=-\int_0^{\infty}\int_{\Omega}D(n)\nabla n\cdot\nabla\phi_1+\int_0^{\infty}\int_{\Omega}n\chi(c)\nabla c\cdot\nabla\phi_1,\\
    &&\int_0^{\infty}\int_{\Omega}c_t\phi_2-\int_0^{\infty}\int_{\Omega}cu \cdot\nabla\phi_2=-\int_0^{\infty}\int_{\Omega}\nabla c\cdot\nabla\phi_2-\int_0^{\infty}\int_{\Omega}nf(c)\phi_2,\\
    &&\int_0^{\infty}\int_{\Omega}u_t\cdot\phi_3-\kappa\int_0^{\infty}\int_{\Omega}u\otimes u\cdot\nabla\phi_3=-\int_0^{\infty}\int_{\Omega}\nabla u\cdot\nabla\phi_3+\int_0^{\infty}\int_{\Omega}n\nabla\Phi\cdot \phi_3
\end{eqnarray*}
hold for all $\phi_1\in C_0^{\infty}(\bar{\Omega}\times[0,\infty))$, $\phi_2\in C_0^{\infty}(\bar{\Omega}\times[0,\infty))$ and $\phi_3\in \left(C_0^{\infty}(\Omega\times[0,\infty))\right)^3$ satisfying $\nabla\cdot\phi_3=0$.
\end{defn}

We can now pass to the proof of our main result.

\noindent{\bf Proof of Theorem \ref{main result}.} Let $\gamma$ be given by Lemma \ref{Lem spati-temporal est.} and set
\begin{eqnarray*}
  \beta:=\left\{\begin{array}{lll}
     \medskip
     \frac{3m+2}{4},&{}1\leq m\leq2,\\
     \medskip
     2,&{}m>2.
  \end{array}\right.
\end{eqnarray*}
By Lemma \ref{Lem energy I2}, Lemma \ref{Lem spati-temporal est.} and Lemma \ref{Lem time derivative}, for some $C>0$ which is independent of $\varepsilon$, we have
\begin{eqnarray*}
&&\left\|n^{\gamma}_{\varepsilon}\right\|_{L_{loc}^{\beta}([0, \infty); W^{1,\beta}(\Omega))}\leq C(T+1),\\
&&\left\|\left(n^{\gamma}_{\varepsilon}\right)_t\right\|_{L_{loc}^{1}([0, \infty); (W^{2,q}(\Omega))^{*})}\leq C(T+1)\quad \mbox{with some}\ q>3,\\
&&\left\|\sqrt{c_{\varepsilon}}\right\|_{L^{2}_{loc}([0, \infty); W^{2,2}(\Omega))}\leq C(T+1),\\
&&\left\|(\sqrt{c_{\varepsilon}})_t\right\|_{L^{\frac{5}{3}}_{loc}([0, \infty); (W^{1,\frac{5}{2}}(\Omega))^{*})}\leq C(T+1),\\
&&\left\|u_{\varepsilon}\right\|_{L_{loc}^{2}([0, \infty); W^{1,2}(\Omega))}\leq C(T+1),\quad \mbox{and}\\
&&\left\|u_{\varepsilon t}\right\|_{L_{loc}^{\frac{5}{4}}([0, \infty); (W_{0,\sigma}^{1,\frac{5}{2}}(\Omega))^{*})}\leq C(T+1)
\end{eqnarray*}
for all $T>0$. Therefore, the Aubin-Lions lemma (\cite{Lions-1969}, see \cite{Simon-AMPA-1987} for the case involving the space $L^p$ with $p=1$) asserts that
\begin{eqnarray*}
&&(n^{\gamma}_{\varepsilon})_{\varepsilon\in(0,1)}\ \mbox{is strongly precompact in}\ L_{loc}^{\beta}(\bar{\Omega}\times[0, \infty)),\\
&&(\sqrt{c_{\varepsilon}})_{\varepsilon\in(0,1)}\ \mbox{is strongly precompact in}\ L_{loc}^{2}([0, \infty); W^{1,2}(\Omega))\quad \mbox{and}\\
&&(u_{\varepsilon})_{\varepsilon\in(0,1)}\ \mbox{is strongly precompact in}\ L_{loc}^{2}(\bar{\Omega}\times[0, \infty)).
\end{eqnarray*}
This yields a subsequence $\varepsilon:=\varepsilon_j\in(0,1)$ $(j\in\mathbb{N})$ and the limit functions $n$, $c$ and $u$ such that
\begin{eqnarray*}
n^{\gamma}_{\varepsilon}\rightarrow n^{\gamma}              &\quad& \mbox{in}\ L_{loc}^{\beta}(\bar{\Omega}\times[0, \infty)),\ \mbox{and a.e. in}\ \Omega\times(0, \infty),\\
n_{\varepsilon}\rightharpoonup n          &\quad& \mbox{in}\ L_{loc}^{\frac{3m+2}{3}}(\bar{\Omega}\times[0, \infty)),\\
D_{\varepsilon}(n_{\varepsilon})\nabla n_{\varepsilon}\rightharpoonup D(n)\nabla n
                                          &\quad& \mbox{in}\ L_{loc}^{\frac{3m+2}{3m+1}}(\bar{\Omega}\times[0, \infty))
\end{eqnarray*}
and
\begin{eqnarray*}
\sqrt{c_{\varepsilon}}\rightarrow\sqrt{c} &\quad& \mbox{in}\ L_{loc}^{2}([0, \infty); W^{1,2}(\Omega))\ \mbox{and a.e. in}\ \Omega\times(0, \infty),\\
c_{\varepsilon}\stackrel{\ast}{\rightharpoonup} c
                                          &\quad& \mbox{in}\ L^{\infty}(\Omega\times(0, \infty)),\\
\nabla c^{\frac{1}{4}}_{\varepsilon}\rightharpoonup \nabla c^{\frac{1}{4}}
                                          &\quad& \mbox{in}\ L_{loc}^{4}(\bar{\Omega}\times[0, \infty))
\end{eqnarray*}
as well as
\begin{eqnarray*}
u_{\varepsilon}\rightarrow u              &\quad& \mbox{in}\ L_{loc}^{2}(\bar{\Omega}\times[0, \infty))\ \mbox{and a.e. in}\ \Omega\times(0, \infty),\\
u_{\varepsilon}\stackrel{\ast}{\rightharpoonup} u
                                          &\quad& \mbox{in}\ L^{\infty}([0, \infty); L^2_{\sigma}(\Omega)),\\
u_{\varepsilon}\rightharpoonup u          &\quad& \mbox{in}\ L_{loc}^{\frac{10}{3}}(\bar{\Omega}\times[0, \infty)),\\
\nabla u_{\varepsilon}\rightharpoonup \nabla u
                                          &\quad& \mbox{in}\ L_{loc}^{2}(\bar{\Omega}\times[0, \infty))
\end{eqnarray*}
as $\varepsilon\rightarrow0$. Moreover, using interpolation inequality for $L^p$-norm, we have
\begin{equation*}
n_{\varepsilon}\rightarrow n            \quad \mbox{in}\ L_{loc}^{\frac{10}{7}}(\bar{\Omega}\times[0, \infty))
\end{equation*}
as $\varepsilon\rightarrow0$. According to (\ref{est. c L infite}) and the Lebesgue dominated convergence theorem, we obtain
\begin{eqnarray*}
F'_{\varepsilon}(n_{\varepsilon})\chi(c_{\varepsilon})c_{\varepsilon}^{\frac{3}{4}}\rightarrow \chi(c)c^{\frac{3}{4}}
                                                         &\quad& \mbox{in}\ L_{loc}^{20}(\bar{\Omega}\times[0, \infty)),\\
f(c_{\varepsilon})\rightarrow f(c)                       &\quad& \mbox{in}\ L_{loc}^{\frac{10}{3}}(\bar{\Omega}\times[0, \infty)),\\
c_{\varepsilon}\rightarrow c                             &\quad& \mbox{in}\ L_{loc}^{2}(\bar{\Omega}\times[0, \infty)),\\
F_{\varepsilon}(n_{\varepsilon})\rightarrow n            &\quad& \mbox{in}\ L_{loc}^{\frac{10}{7}}(\bar{\Omega}\times[0, \infty)),\\
Y_{\varepsilon}u_{\varepsilon}\rightarrow u              &\quad& \mbox{in}\ L_{loc}^{2}(\bar{\Omega}\times[0, \infty))
\end{eqnarray*}
as $\varepsilon\rightarrow0$. Therefore,
\begin{eqnarray*}
n_{\varepsilon}u_{\varepsilon}\rightharpoonup nu         &\quad& \mbox{in}\ L_{loc}^{1}(\bar{\Omega}\times[0, \infty)),\\
n_{\varepsilon}F'_{\varepsilon}(n_{\varepsilon})\chi(c_{\varepsilon})\nabla c_{\varepsilon}\rightharpoonup n\chi(c)\nabla c
                                                         &\quad& \mbox{in}\ L_{loc}^{1}(\bar{\Omega}\times[0, \infty)),\\
F_{\varepsilon}(n_{\varepsilon})f(c_{\varepsilon})\rightarrow nf(c)
                                                         &\quad& \mbox{in}\ L_{loc}^{1}(\bar{\Omega}\times[0, \infty)),\\
c_{\varepsilon}u_{\varepsilon}\rightarrow cu             &\quad& \mbox{in}\ L_{loc}^{1}(\bar{\Omega}\times[0, \infty)),\\
Y_{\varepsilon}u_{\varepsilon}\otimes u\rightarrow u\otimes u
                                                         &\quad& \mbox{in}\ L_{loc}^{1}(\bar{\Omega}\times[0, \infty))
\end{eqnarray*}
as $\varepsilon\rightarrow0$.
Based on the above convergence properties, we can pass to the limit in each term of weak formulation for (\ref{RegularizedCNS}) to construct a global weak solution of (\ref{CNS})-(\ref{initial c}) and thereby completes the proof.



\end{document}